\documentclass[12pt]{amsart}
\pdfoutput=1
\usepackage{amsmath,amssymb,amscd,graphicx}
\usepackage[colorlinks=true, pdfstartview=FitV, linkcolor=blue, citecolor=blue, urlcolor=blue]{hyperref}
\usepackage{mathpazo}
\theoremstyle{plain}
\newtheorem*{nonumtheorem}{Theorem}
\newtheorem{theorem}{Theorem}
\newtheorem*{lemma}{Lemma}

\newtheorem*{conjecture}{Conjecture}

\newcommand{\tre}{\text{Re}}
\newcommand{\tim}{\text{Im}}

\theoremstyle{definition}
\newtheorem*{remark}{Remark}

\newtheorem*{definition}{Definition}

\begin{document}
\title{Lehmer Pairs Revisited}
\author{Jeffrey Stopple}
\begin{abstract}
We seek to understand how the technical definition  of a Lehmer pair can be related to more analytic properties of the Riemann zeta function, particularly the location of the zeros of $\zeta^\prime(s)$.  Because we are interested in the connection \cite{CSV} between Lehmer pairs and the de Bruijn-Newman constant $\Lambda$, we assume the Riemann Hypothesis throughout.  We define strong Lehmer pairs via an inequality on the derivative  of the pre-Schwarzian of Riemann's function $\Xi(t)$, evaluated at consecutive zeros:
\[
-\Delta^2\left(P\Xi^\prime(\gamma_+)+P\Xi^\prime(\gamma_-)\right)<42/5.
\]
Theorem \ref{Th:1} shows that strong Lehmer pairs are Lehmer pairs.  Theorem \ref{Th:2} describes  $P\Xi^\prime(\gamma)$ in terms of  $\zeta^\prime(\rho)$  where $\rho=1/2+i\gamma$.  Theorem \ref{Th:3} expresses $P\Xi^\prime(\gamma_+)+P\Xi^\prime(\gamma_-)$ in terms of nearby zeros $\rho^\prime$ of $\zeta^\prime(s)$.  We examine $114\,661$ pairs of zeros of $\zeta(s)$ around height $t=10^6$, finding $855$ strong Lehmer pairs.  These are compared to the corresponding zeros of $\zeta^\prime(s)$ in the same range.
\end{abstract}
\email{stopple@math.ucsb.edu}
\keywords{Riemann hypothesis, de Bruijn-Newman constant, backward heat equation, Lehmer pair, random matrix theory}
\subjclass[2010]{11M06, 11M50, 11Y35}

\maketitle

\section{Introduction}

In 1927 Polya introduced a deformation parameter in Riemann's function $\Xi(t)$, so the zeros flow according to the (backward) heat equation.  The de Bruijn-Newman constant $\Lambda$ measures the tendency of any deformation backward in time to result in non-real zeros.  Newman conjectured that $\Lambda\ge0$, which says in crude terms that  the Riemann Hypothesis (if true) is as close to failing as it possibly could be.  Sufficiently close pairs of zeros of the Riemann zeta function, so-called Lehmer pairs,  may be used to give lower bounds on $\Lambda$, and in fact the existence of infinitely many Lehmer pairs would imply that $\Lambda\ge 0$.  This is explained in more detail in \S \ref{S:2} and \S \ref{S:3}, which are expository and may be skipped by experts.

We seek to understand how the technical definition \cite{CSV} of a Lehmer pair can be related to more analytic properties of the Riemann zeta function, particularly the location of the zeros of $\zeta^\prime(s)$.  

In \S \ref{S:4}, we define strong Lehmer pair via an inequality on the derivative  of the pre-Schwarzian of  $\Xi(t)$, evaluated at consecutive zeros:
\[
-\Delta^2\left(P\Xi^\prime(\gamma_+)+P\Xi^\prime(\gamma_-)\right)<42/5.
\]
Theorem \ref{Th:1} in \S \ref{S:4}  shows that strong Lehmer pairs are Lehmer pairs.  In \S \ref{S:5}, Theorem \ref{Th:2} describes  $P\Xi^\prime(\gamma)$ in terms of  $\zeta^\prime(\rho)$  where $\rho=1/2+i\gamma$ is a zero of $\zeta(s)$.  Theorem \ref{Th:3} in \S \ref{S:5} expresses $P\Xi^\prime(\gamma_+)+P\Xi^\prime(\gamma_-)$ in terms of nearby zeros $\rho^\prime$ of $\zeta^\prime(s)$.  In \S \ref{S:6} is a discussion of the contributions of the various terms, in particular, the contribution of the (typically) unique zero $\rho_0^\prime$ of $\zeta^\prime(s)$ whose imaginary part lies between those of consecutive zeros $\rho_-$ and $\rho_+$ of $\zeta(s)$.
In \S \ref{S:7} we examine $114\,661$ pairs of zeros of $\zeta(s)$ around height $t=10^6$, finding $855$ strong Lehmer pairs.  These are compared to the corresponding zeros of $\zeta^\prime(s)$ in the same range.  Finally, we conjecture that Soundararajan's Conjecture B in \cite{Sound} implies that $\Lambda=0$.

Because we are interested in the connection between Lehmer pairs and the de Bruijn-Newman constant $\Lambda$, the Riemann Hypothesis $\Lambda\le 0$ is assumed  throughout this paper.

\section{The de Bruijn-Newman constant}\label{S:2}

The Riemann xi functions $\xi(s)$ and $\Xi(t)$ are defined in Titchmarsh  \cite[p.\ 16]{Tit}, with $s=1/2+it$, as
\begin{align*}
\xi(s)=&\frac12s(s-1)\pi^{-s/2}\Gamma(s/2)\zeta(s)\overset{\text{def.}}=h(s)\zeta(s),\\
\Xi(t)=&\xi(1/2+it)
=2\int_0^\infty \Phi(u)\cos(ut) \, du,
\end{align*}
where $\Phi(u)$ is defined as \cite[\S 10.1]{Tit}
\[
2\sum_{n=1}^\infty\left(2\pi^2n^4\exp(9u/2)-3\pi n^2\exp(5u/2)\right)\exp(-n^2\pi\exp(2u)).
\]
Since we are going to introduce the heat equation we have a clash of notations: $t=\tim(s)$ for the Riemann zeta function v. $t$ representing time in the heat equation.  So in this section only we will write $\Xi(x)$ instead of $\Xi(t)$.  In 1927 Polya \cite{Polya} introduced a deformation parameter $t$:
\[
\Xi_t(x)=\int_0^\infty \exp(t u^2)\Phi(u)\cos(ux)\, du,
\]
so that for $t=0$, $\Xi_0(x)$ is just $\Xi(x)/2$.  It is easy to see that this function satisfies the \textsc{backward heat equation}
\begin{equation}\label{Eq:bheat}
\frac{\partial \Xi}{\partial t}+\frac{\partial^2 \Xi}{\partial x^2}=0.
\end{equation}
(Neither Polya, nor de Bruijn, nor Newman call attention to this.)
\begin{nonumtheorem}[de Bruijn, \cite{dB}]  The following are true about the zeros in $x$:
\begin{enumerate}
\item For $t\ge 1/2$, $\Xi_t(x)$ has only real zeros.
\item If for some real $t$, $\Xi_t(x)$ has only real zeros, then $\Xi_{t^\prime}(x)$ also has only real zeros for any $t^\prime>t.$
\end{enumerate}
\end{nonumtheorem}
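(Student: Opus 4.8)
The plan is to treat the two assertions by different means: part (2) is a soft monotonicity statement flowing from the Laguerre--P\'olya calculus, whereas part (1) is the one place where the arithmetic structure of $\Phi$ (its origin in Jacobi's $\vartheta$-function) genuinely enters. The preliminary common to both is that $\Phi$ extends to an even function of $u$ (Jacobi's transformation formula, Titchmarsh \S10.1) that decays like $e^{9u/2-\pi e^{2u}}$; consequently, for \emph{every} real $t$ the weight $e^{tu^2}\Phi(u)$ is still super-exponentially small, so $\Xi_t(x)=\tfrac12\int_{-\infty}^{\infty}e^{tu^2}\Phi(u)e^{ixu}\,du$ is entire, even, real and bounded on $\mathbb R$, and of order $1$.

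For part (2) I would first record the identity $\Xi_{t'}=e^{-(t'-t)D^2}\Xi_t$ for $t'>t$, with $D=d/dx$: this is term-by-term integration via $e^{-\varepsilon D^2}\cos(xu)=e^{\varepsilon u^2}\cos(xu)$, the interchange of sum and integral being licensed by the decay of $\Phi$, which absorbs $e^{(t+\varepsilon)u^2}$. It therefore suffices to show that $e^{-\varepsilon D^2}$, $\varepsilon>0$, carries a function with only real zeros to one with only real zeros. For a real polynomial $P$ with only real zeros this is classical: factor $1-cD^2=(1-\sqrt{c}\,D)(1+\sqrt{c}\,D)$, use the classical fact that $P\mapsto P+aP'$ preserves reality of zeros for real $a$ (differentiate $e^{x/a}P$ and apply Rolle), iterate over $(1-\tfrac{\varepsilon}{m}D^2)^m$, and let $m\to\infty$ (Hurwitz). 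Since the hypothesis is exactly that $\Xi_t$ lies in the Laguerre--P\'olya class, write $\Xi_t=\lim_n P_n$ locally uniformly, with $P_n$ the real-rooted partial products of its Hadamard factorization (no Gaussian factor, as $\Xi_t$ has order $1$); applying $e^{-\varepsilon D^2}$ and passing to the limit — a dominated-convergence step using that the partial products are majorized by an order-$1$ function, against which $e^{-y^2/4\varepsilon}$ is integrable — exhibits $\Xi_{t'}=e^{-\varepsilon D^2}\Xi_t$ as a locally uniform limit of real-rooted polynomials, whence Hurwitz concludes. (In the heat-equation picture this reads: a conjugate pair of zeros can only be born, as $t$ decreases through some $t_0$, from a double real zero $x_0$, and the local expansion $\Xi_t(x)\approx\tfrac12\Xi_{t_0}''(x_0)\bigl((x-x_0)^2-2(t-t_0)\bigr)$ forced by $\partial_t\Xi=-\partial_x^2\Xi$ shows the split is into two \emph{real} zeros for $t>t_0$; the growth bounds are still needed to exclude zeros migrating in from infinity.)

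For part (1) — which is unconditional — I would follow de Bruijn and use the explicit series for $\Phi$. Writing $2\Xi_{1/2}(x)=\int_{-\infty}^{\infty}e^{u^2/2}\Phi(u)e^{ixu}\,du$ and inserting the $\vartheta$-series, the goal is to realize $\Xi_{1/2}$ as a locally uniform limit of functions with only real zeros assembled from finitely many $\vartheta$-terms together with an integral representation of each $e^{-\pi n^2 e^{2u}}$, the admissible size of the stripped-off Gaussian — hence the precise threshold $1/2$ — being controlled by a convexity bound comparing $u^2$ with the exponent $e^{2u}$ (symmetrized, using evenness of $\Phi$, via $2\cosh 2u\ge 2+4u^2$). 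I expect this to be the main obstacle: part (2) is essentially complex analysis once the identity $\Xi_{t'}=e^{-\varepsilon D^2}\Xi_t$ and the decay of $\Phi$ are secured, while part (1) is where one must actually compute with Jacobi's $\vartheta$-function and extract the constant $1/2$ rather than some larger value.
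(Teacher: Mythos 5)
The paper does not prove this theorem; it is quoted verbatim from de Bruijn \cite{dB} as background in the expository section \S 2, so there is no in-paper argument to compare yours against. Judged on its own terms, your treatment of part (2) is a sound sketch of the standard P\'olya--de Bruijn argument: the identity $\Xi_{t'}=e^{-(t'-t)D^2}\Xi_t$ justified by the super-exponential decay of $\Phi$, Hermite--Poulain for the factors $1\pm\sqrt{c}\,D$, the limit $(1-\tfrac{\varepsilon}{m}D^2)^m\to e^{-\varepsilon D^2}$ on polynomials, and Hurwitz applied to the Hadamard partial products. The one technical point you gesture at correctly but should make explicit is the realization $e^{-\varepsilon D^2}f(x)=(4\pi\varepsilon)^{-1/2}\int f(x+iy)e^{-y^2/4\varepsilon}\,dy$ (Gaussian integration in the \emph{imaginary} direction), together with the uniform-in-$n$ majorant $|P_n(z)|\le |c|\,|z|^m\prod_j(1+|z|^2/\gamma_j^2)\le e^{C|z|\log|z|}$ for the partial products, which is what licenses the dominated-convergence step.

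Part (1) is where you have a genuine gap: what you offer is a plan, not a proof, and you say so yourself (``I expect this to be the main obstacle''). Nothing is actually established about $\Xi_{1/2}$; the decomposition of $e^{u^2/2}\Phi(u)$ into real-rooted pieces and the extraction of the threshold are entirely deferred. The route that closes this gap without touching the $\vartheta$-series is a quantitative strengthening of your part (2), which is de Bruijn's Theorem 13: if all zeros of $\Xi_{t_0}$ lie in the strip $|\mathrm{Im}\,x|\le c$, then all zeros of $\Xi_t$ for $t>t_0$ lie in $|\mathrm{Im}\,x|\le\sqrt{\max(c^2-2(t-t_0),0)}$. Combined with the unconditional fact (Euler product) that the zeros of $\Xi_0=\Xi/2$ lie in $|\mathrm{Im}\,x|\le c$ for an explicit $c$, the strip collapses onto the real axis by time $c^2/2$, which is part (1). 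Proving the \emph{narrowing} (as opposed to mere preservation of the strip, which your Hermite--Poulain step does give, since $\mathrm{Im}(P'/P)<0$ above the zeros) is the real content, and it is not supplied by your argument. A final caution about the constant: in the normalization of this paper ($\Phi$ with $e^{2u}$, $\Xi_0(x)=\tfrac12\xi(\tfrac12+ix)$, zeros in $|\mathrm{Im}\,x|\le\tfrac12$) the strip-narrowing argument yields reality of zeros already for $t\ge 1/8$; the stated threshold $1/2$ is the constant in de Bruijn's own normalization, where the initial strip has half-width $1$. So designing a convexity inequality to produce ``exactly $1/2$'' risks chasing a normalization artifact rather than a genuine analytic threshold.
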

In 1976 Newman \cite{Newman} showed that
\begin{nonumtheorem}[Newman]
There exists a real constant $\Lambda$, $-\infty<\Lambda\le 1/2$, such that
\begin{enumerate}
\item $\Xi_t(x)$ has only real zeros if and only if $t\ge\Lambda$, and
\item $\Xi_t(x)$ has some complex zeros if $t<\Lambda$.
\end{enumerate}
\end{nonumtheorem}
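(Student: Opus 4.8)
The plan is to \emph{define} $\Lambda$ as the infimum of
\[
T=\bigl\{t\in\mathbb R:\ \Xi_t(x)\ \text{has only real zeros}\bigr\},
\]
and to extract the three assertions from de Bruijn's theorem together with a Hurwitz-type continuity argument. Part (1) of de Bruijn's theorem gives $[1/2,\infty)\subseteq T$, so $T\neq\varnothing$ and $\Lambda\le 1/2$; part (2) says $T$ is upward closed, so $T$ is either $[\Lambda,\infty)$ or $(\Lambda,\infty)$. Granting this, the content of Newman's theorem is just two statements: (a) $\Lambda\in T$, so $T=[\Lambda,\infty)$ --- this is exactly the ``if and only if'' in (1), and (2) then follows because $t<\Lambda=\inf T$ forces $t\notin T$, i.e.\ $\Xi_t$ has a non-real zero; and (b) $\Lambda>-\infty$.

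For (a) I would first record the analytic facts. Since $\Phi(u)=\Phi(-u)$ (the theta functional equation) and $\Phi(u)\ll e^{9|u|/2}e^{-\pi e^{2|u|}}$, the integral defining $\Xi_t(x)$ converges absolutely for every real $t$ and defines an entire function of $x$; differentiating under the integral sign in $t$ (the integrand being locally dominated) shows $t\mapsto\Xi_t$ is continuous, even real-analytic, uniformly on compact subsets of $\mathbb C$; and $\Xi_t(0)=\int_0^\infty e^{tu^2}\Phi(u)\,du>0$ because $\Phi>0$, so $\Xi_t\not\equiv0$. Now pick $t_n\downarrow\Lambda$ with $t_n\in T$, so $\Xi_{t_n}\to\Xi_\Lambda$ locally uniformly. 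If $\Xi_\Lambda$ had a zero $z_0$ with $\tim z_0\neq0$, choose a small closed disc $D$ around $z_0$ with $D\cap\mathbb R=\varnothing$ on which $\Xi_\Lambda$ has no other zero and no zero on $\partial D$; by Hurwitz's theorem $\Xi_{t_n}$ has a zero in $D$ for all large $n$, contradicting $t_n\in T$. Hence $\Xi_\Lambda$ has only real zeros, $\Lambda\in T$, and (1) and (2) follow.

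Step (b), $\Lambda>-\infty$, is the genuine obstacle, and is Newman's actual contribution: nothing in de Bruijn's theorem ever produces a non-real zero. Here I would invoke Newman's argument. The idea is that for $t$ very negative the multiplier $e^{tu^2}=e^{-|t|u^2}$ concentrates $e^{tu^2}\Phi(u)$ into an increasingly narrow, essentially Gaussian bump about $u=0$, so that $\Xi_t(z)=\int_0^\infty e^{-|t|u^2}\Phi(u)\cos(uz)\,du$ can be analyzed by steepest descent; one shows that below an explicit negative threshold $\Xi_t$ falls out of the Laguerre--P\'olya class --- equivalently, that its real zeros become too sparse to be consistent with the factorization forced by having only real zeros --- so $\Lambda$ is finite. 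I expect the soft half-line structure in (a) to be routine, while the delicate work --- the uniform saddle-point estimates and the explicit threshold behind (b) --- is exactly what Newman supplies.
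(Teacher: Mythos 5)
The paper does not prove this theorem: it is quoted verbatim from Newman \cite{Newman} in the expository section \S 2, so there is no internal argument to compare yours against. Judged on its own terms, the structural half of your proposal is correct and is the standard way to organize the statement: define $\Lambda=\inf T$, get $T\supseteq[1/2,\infty)$ and upward closure from de Bruijn's theorem, verify $\Xi_t\not\equiv 0$ and locally uniform dependence on $t$ by dominated convergence, and use Hurwitz on a disc avoiding $\mathbb R$ to conclude $\Lambda\in T$, hence $T=[\Lambda,\infty)$ and both enumerated assertions. That part is sound (the positivity of $\Phi$ on $[0,\infty)$, which you use to see $\Xi_t(0)>0$, is a classical fact worth citing rather than asserting).

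The gap is exactly where you acknowledge it: the finiteness $\Lambda>-\infty$ is the entire content of Newman's theorem beyond de Bruijn's, and your proposal does not prove it. The sketch you offer --- steepest descent on $\int_0^\infty e^{-|t|u^2}\Phi(u)\cos(uz)\,du$ showing the real zeros ``become too sparse'' --- is too vague to check and does not obviously reconstruct Newman's actual mechanism, which runs roughly in the opposite direction: he shows that if $\Xi_t$ had only real zeros for \emph{every} $t<0$, then $\Phi$ would be forced (via properties of the Laguerre--P\'olya class under the backward heat flow) to satisfy a rigidity condition that it visibly fails, rather than exhibiting a non-real zero below an explicit threshold by asymptotic analysis. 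Since the statement is being quoted from the literature, deferring to \cite{Newman} for this step is legitimate as a citation, but it means your text is an organization of the theorem around the one hard input rather than a proof of it; as written, step (b) is an appeal to authority, not an argument.
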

The constant $\Lambda$ is known as the de Bruijn-Newman constant.  The Riemann hypothesis is the conjecture that $\Lambda\le 0$.  Newman made the complementary conjecture that $\Lambda\ge 0$, with the often quoted remark
\begin{quotation}
\lq\lq\emph{This new conjecture is a quantitative version of the dictum that the Riemann hypothesis, if true, is only barely so.}\rq\rq
\end{quotation}

Since we are interested here in the Newman conjecture, we can assume the Riemann Hypothesis $\Lambda\le 0$, since its negation is $\Lambda>0$ which implies the Newman conjecture.

Given the significance of the de Bruijn-Newman constant, much work has gone into estimating lower bounds, and a decade saw twelve orders of magnitude improvement, from the first bound \cite{CVN2}
\[
-50<\Lambda
\]
to the current record \cite{Saouter}
\[
-1.14\times 10^{-11}<\Lambda.
\]
A breakthrough occurred in the work of Csordas, Smith and Varga, \cite{CSV}, in which it was realized that unusually close pairs of zeros of the Riemann zeta function, the so-called Lehmer pairs, could be used to give lower bounds on $\Lambda$.  

\section{Lehmer pairs}\label{S:3}
Let as usual $Z(t)$ denote the Hardy function. 
Assuming the Riemann Hypothesis, it is well known that for $t$ sufficiently large, $Z^\prime/Z(t)$ is monotonically decreasing between consecutive zeros of $Z(t)$.  Consequently, for $t$ sufficiently large, $Z(t)$ has no negative local maximum or positive local minimum.

\begin{figure}
\begin{center}
\includegraphics[scale=.8, viewport=0 0 400 230,clip]{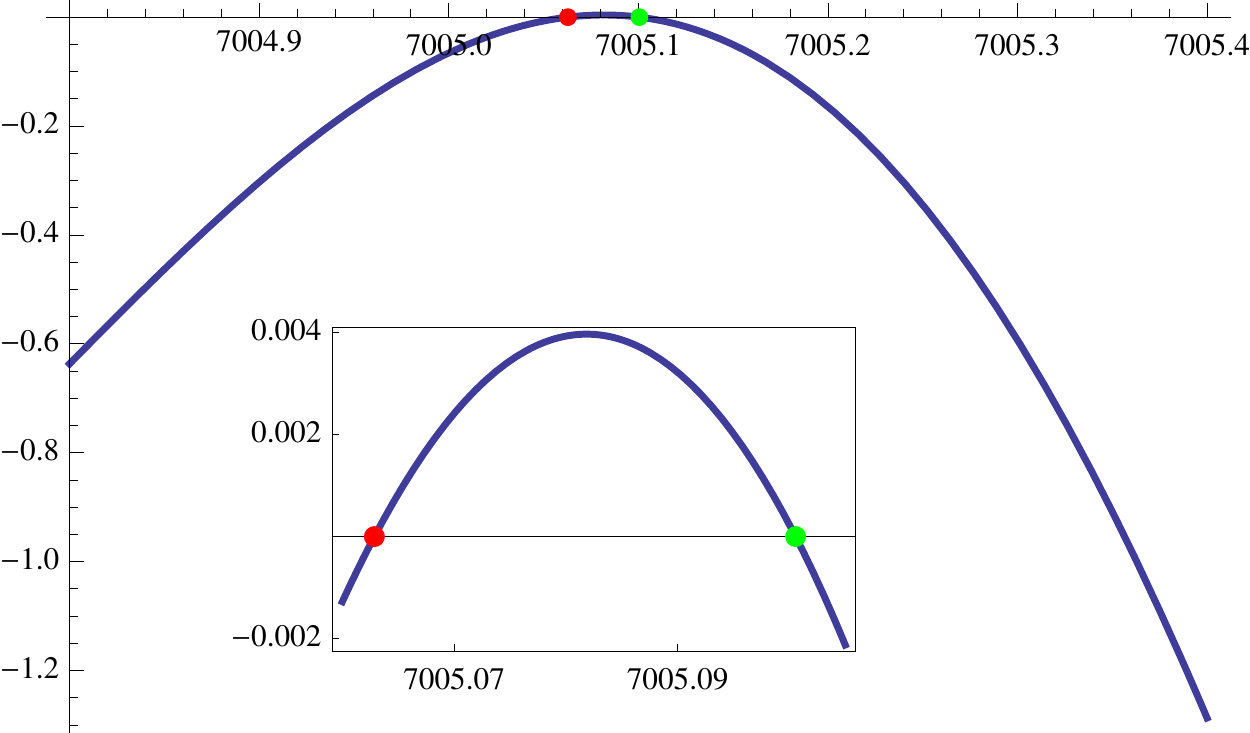}
\caption{Lehmer's phenomenon for $Z(t)$, showing the Lehmer pair $\{\gamma_{6709},\gamma_{6710}\}$ in red and green.}\label{F:1}
\end{center}
\end{figure}

Lehmer \cite{Lehmer} discovered a \lq near-counterexample\rq\  in the form of a very small positive local maximum of $Z(t)$, between two unusually close zeros, near $t=7005.$, see Figure \ref{F:1}.  This and other such examples are known informally as \lq\lq Lehmer's phenomenon,\rq\rq\ with the corresponding close pair of zeros called a \lq\lq Lehmer pair.\rq\rq\ \ 

In \cite{CSV} Csordas, Smith, and Varga give a precise, though somewhat technical, definition of Lehmer pair:
\begin{definition}  Let $0<\gamma_-<\gamma_+$ be two consecutive simple positive zeros of $\Xi(t)$ (so also $Z(\gamma_-)=0=Z(\gamma_+)$).   Let $\Delta=\gamma_+-\gamma_-$, and let 
\begin{equation}\label{Eq:gkdef}
g=\sum_{\gamma\ne\gamma_-,\gamma_+}\frac{1}{(\gamma-\gamma_-)^2}+\frac{1}{(\gamma-\gamma_+)^2}.
\end{equation}
Then $\{\gamma_-,\gamma_+\}$ is a \textsc{Lehmer pair} if
\begin{equation}\label{Def:Lehmer}
\Delta^2\, g<\frac{4}{5}.
\end{equation}
\end{definition}
They prove the following
\begin{nonumtheorem}[Csordas \emph{et al}]  Let $\{\gamma_-,\gamma_+\}$ be a Lehmer pair, and define
\begin{equation}\label{Def:lambdak}
\lambda=\frac{(1-5\Delta^2 g/4)^{4/5}-1}{8g}
\end{equation}
so that $-1/(8g)<\lambda<0$.  Then the de Bruijn-Newman constant $\Lambda$ satisfies
\begin{equation}\label{Eq:Lambdabound}
\lambda\le \Lambda.
\end{equation}
Furthermore, the existence of infinitely many Lehmer pairs implies that the de Bruijn-Newman constant $\Lambda$ is equal $0$.
\end{nonumtheorem}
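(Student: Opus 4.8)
The plan is to interpret the two zeros of a Lehmer pair as points moving under the backward heat flow \eqref{Eq:bheat}, to show that as $t$ decreases from $0$ they are forced to collide --- hence to leave the real axis --- at some time $t^\ast\ge\lambda$, and then to invoke Newman's theorem to conclude $\Lambda\ge t^\ast\ge\lambda$. First I would record the equations of motion. For $t$ in the range where (granting $\Lambda\le 0$) $\Xi_t(x)$ has only real zeros $\{\pm\gamma_j(t)\}$, it lies in the Laguerre--P\'olya class and, being even of order one, factors as $\Xi_t(x)=\Xi_t(0)\prod_j\bigl(1-x^2/\gamma_j(t)^2\bigr)$. Taking the logarithmic derivative, so that $\Xi_t''(\gamma_m)/\Xi_t'(\gamma_m)=2\sum_{\gamma\ne\gamma_m}1/(\gamma_m-\gamma)$ at a simple zero, and differentiating $\Xi_t(\gamma_m(t))=0$ with $\partial_t\Xi_t=-\partial_x^2\Xi_t$, gives the system
\[
\dot\gamma_m(t)\;=\;\frac{\Xi_t''(\gamma_m)}{\Xi_t'(\gamma_m)}\;=\;2\sum_{\gamma\ne\gamma_m}\frac{1}{\gamma_m-\gamma},
\]
absolutely convergent once the terms $\pm\gamma$ are paired.

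Next I would isolate the pair $\gamma_\pm(t)$. With $\Delta(t)=\gamma_+(t)-\gamma_-(t)$, subtracting the two equations and peeling off the mutual term gives
\[
\frac{d}{dt}\Delta(t)^2\;=\;8-4\,\Delta(t)^2 S(t),\qquad S(t)=\sum_{\gamma\ne\gamma_\pm}\frac{1}{(\gamma_+(t)-\gamma)(\gamma_-(t)-\gamma)}.
\]
Since consecutive zeros enclose no other zero, the two factors in each summand of $S(t)$ have the same sign, so $0<S(t)\le\tfrac12 g(t)$ by $uv\le\tfrac12(u^2+v^2)$, whence
\[
\frac{d}{dt}\Delta(t)^2\;\ge\;8-2\,g(t)\,\Delta(t)^2.\qquad\text{(A)}
\]
The companion input is a bound on the growth of $g(t)$ as $t$ decreases,
\[
\frac{d}{dt}g(t)\;\ge\;-8\,g(t)^2,\qquad\text{i.e.}\qquad g(t)\le\frac{g}{1+8gt}\quad(\lambda\le t\le 0);\qquad\text{(B)}
\]
here $1+8g\lambda=(1-5\Delta^2 g/4)^{4/5}>0$ by \eqref{Def:lambdak} and the Lehmer inequality \eqref{Def:Lehmer}, so the denominator stays positive on $[\lambda,0]$. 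Substituting (B) into (A) and integrating the resulting linear differential inequality for $\Delta(t)^2$ from $t$ up to $0$, with integrating factor $(1+8gt)^{1/4}$, gives
\[
(1+8gt)^{1/4}\,\Delta(t)^2\;\le\;\Delta^2+\frac{4}{5g}\bigl((1+8gt)^{5/4}-1\bigr),
\]
whose right-hand side is exactly $0$ at $t=\lambda$. Now argue by contradiction: if $\Lambda<\lambda$, then by Newman's theorem $\Xi_t$ has only real zeros for all $t\in(\Lambda,0]\supseteq[\lambda,0]$, the pair $\gamma_\pm(t)$ persists with $\Delta(t)>0$ throughout, yet the last display forces $\Delta(\lambda)^2\le 0$. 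Hence $\Lambda\ge\lambda$.

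The genuine difficulty is (B). Whereas (A) only records how the pair feels the rest of the zeros collectively, differentiating $g(t)$ along the flow produces a double sum, of the rough shape $\sum_\gamma\sum_\mu(\gamma-\gamma_\pm)^{-3}\bigl((\gamma-\mu)^{-1}-(\gamma_\pm-\mu)^{-1}\bigr)$, and the content of (B) is that Cauchy--Schwarz / arithmetic--geometric-mean bookkeeping collapses it to $-8g(t)^2$; getting this constant right is precisely what produces the exponent $4/5$ in the formula for $\lambda$. Subsidiary points to check are the Laguerre--P\'olya factorization and the termwise differentiation, uniformly in $t$ on the relevant interval, and the handling of the isolated times at which $\Xi_t$ might acquire a multiple zero (through which the flow, and $\Delta(t)$, extend continuously).

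For the final sentence: a bounded height interval contains only finitely many zeros of $\Xi$, hence only finitely many consecutive pairs, so if Lehmer pairs are infinite in number their heights $\gamma_+$ must tend to infinity. There the zeros crowd: $(\gamma_+,\gamma_++1]$ already contains arbitrarily many zeros as $\gamma_+\to\infty$ (indeed $\sim(2\pi)^{-1}\log\gamma_+$ of them), each contributing at least $1$ to the sum $g$ of \eqref{Eq:gkdef}; so $g\to\infty$ along such pairs while $\Delta^2 g$ stays in $[0,4/5)$, and therefore $\lambda=\bigl((1-5\Delta^2 g/4)^{4/5}-1\bigr)/(8g)\to 0^-$. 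Since each Lehmer pair yields $\lambda\le\Lambda$, this gives $\Lambda\ge 0$, which together with the standing Riemann Hypothesis $\Lambda\le 0$ yields $\Lambda=0$.
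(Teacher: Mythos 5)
This theorem is quoted in the paper from \cite{CSV} without proof, so there is no internal argument to compare against; your proposal has to stand on its own as a reconstruction of the Csordas--Smith--Varga argument. The architecture you describe is the right one and is essentially theirs: the zeros evolve by $\dot\gamma_m=2\sum_{\gamma\ne\gamma_m}(\gamma_m-\gamma)^{-1}$, the pair obeys $\frac{d}{dt}\Delta(t)^2\ge 8-2g(t)\Delta(t)^2$, the quantity $g(t)$ obeys a Riccati-type bound, and the integrating factor $(1+8gt)^{1/4}$ produces exactly the exponent $4/5$ and the formula \eqref{Def:lambdak}. Indeed, the constants $2$ and $8$ in your (A) and (B) are the unique pair compatible with that formula, which is a strong consistency check. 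The derivation of (A) from AM--GM, the Gronwall step, the local analysis at a collision (a double real zero of $\Xi_{t^*}$ must split into a complex conjugate pair for $t<t^*$, since $\Xi_t(x_0)\approx (t^*-t)\,\Xi_{t^*}''(x_0)$ has the same sign as the quadratic term, so $\Xi_t$ does not vanish on the real axis near $x_0$ while Hurwitz forces two nearby zeros), and the deduction of $\Lambda\ge0$ from infinitely many Lehmer pairs via $g\ge(1+o(1))\log(\gamma_+)/2\pi\to\infty$ are all correct as written.

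The genuine gap is the one you flag yourself: inequality (B), $g'(t)\ge-8g(t)^2$ along the flow, is asserted rather than proved, and it is the analytic heart of the theorem --- everything else is bookkeeping around it. Differentiating \eqref{Eq:gkdef} along the flow produces terms $-2(\dot\gamma-\dot\gamma_\pm)/(\gamma-\gamma_\pm)^3$ in which the cubes change sign on either side of the pair and the velocities $\dot\gamma$ are themselves conditionally convergent sums over all zeros; ``Cauchy--Schwarz / AM--GM bookkeeping'' is a plan, not a proof, and it is not evident a priori that the admissible constant is $8$ rather than something larger (which would still yield a lower bound for $\Lambda$, but not the stated value of $\lambda$). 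One also needs to rule out, on the interval $[\lambda,0]$, collisions of $\gamma_\pm$ with third zeros, which would make $g(t)$ blow up and invalidate the differential inequalities; you mention this only in passing. In \cite{CSV} these estimates constitute the technical core of the paper. As it stands, your argument is a correct and well-organized reduction of the theorem to (B), together with a complete treatment of the limiting statement, but it is not a complete proof.
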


Odlyzko, in \cite{Odlyzko}, gives a heuristic argument that the GUE distribution of the Riemann zeros should imply the existence of infinitely many Lehmer pairs, and thus $\Lambda=0$.  With the eventual goal of making this heuristic rigorous, we seek to understand how the technical definition of Lehmer pair can be related to more analytic properties of $\zeta(s)$, in particular the location of the zeros of $\zeta^\prime(s)$.  More precisely, we note that the definition (\ref{Def:Lehmer}) is the least restrictive possible that allows the proof of (\ref{Eq:Lambdabound}) to go through.  As a result, one finds that Lehmer pairs are not particularly rare.  A computation shows there are 29 examples amongst the first 649 zeros ($t<1000$). But from the point of view of proving there are infinitely many Lehmer pairs, a more restrictive definition may be more useful.

\section{The pre-Schwarzian}\label{S:4}

The logarithmic derivative of the derivative of an analytic function $f(z)$ is sometimes called the pre-Schwarzian of $f(z)$:
\[
Pf(z)\overset{\text{def.}}=\frac{f^{\prime\prime}(z)}{f^\prime(z)}.
\]
We make the notational convention that $P$ has priority over derivative:
\[
Pf^\prime(z) \quad\text{means}\quad \left(\frac{f^{\prime\prime}(z)}{f^\prime(z)}\right)^\prime,\quad\text{not}\quad \frac{f^{\prime\prime\prime}(z)}{f^{\prime\prime}(z)}.
\]
\begin{lemma}
Suppose $q(z)$ is any analytic function in a domain $D$, and $q(w)\ne0$.  Then for $f(z)=(z-w)q(z)$ we have that, when evaluated at $w$, the pre-Schwarzian satisfies
\begin{equation}\label{Eq:pre}
Pf(w)=2\frac{q^\prime(w)}{q(w)}.
\end{equation}
(This is Lemma 2.3 in \cite{CSV}.)\ \ 
Furthermore, one calculates that
\begin{equation}\label{Eq:prepre}
Pf^\prime(w)=\frac{3q\cdot q^{\prime\prime}-4(q^\prime)^2}{q^2}(w)=3\left(\frac{q^\prime}{q}\right)^\prime(w)-\left(\frac{q^\prime}{q}\right)^2(w),
\end{equation}
and so
\begin{equation}\label{Eq:prepre1}
(Pf^\prime+\frac{1}{4}(Pf)^2)(w)=3\left(\frac{q^\prime}{q}\right)^\prime(w).
\end{equation}
\end{lemma}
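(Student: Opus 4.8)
The plan is to treat the whole statement as a bookkeeping exercise built on the Leibniz rule applied to $f(z)=(z-w)q(z)$, exploiting the fact that every term surviving evaluation at $w$ must be free of a factor $(z-w)$. First I would record
\[
f'=q+(z-w)q',\qquad f''=2q'+(z-w)q'',\qquad f'''=3q''+(z-w)q''',
\]
so that at $z=w$ we have $f'(w)=q(w)$, $f''(w)=2q'(w)$, and $f'''(w)=3q''(w)$. Since $q(w)\ne0$, the function $f$ has a simple zero at $w$, hence $f'$ is non-vanishing in a neighbourhood of $w$, so $f''/f'$ is analytic there and may legitimately be differentiated. Dividing $f''(w)$ by $f'(w)$ gives $Pf(w)=2q'(w)/q(w)$, which is \eqref{Eq:pre} (and reproves the cited Lemma 2.3 of \cite{CSV}).

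For \eqref{Eq:prepre} I would apply the quotient rule in the form $(f''/f')'=\bigl(f'''f'-(f'')^2\bigr)/(f')^2$ and substitute $z=w$, obtaining $\bigl(3q''(w)q(w)-4q'(w)^2\bigr)/q(w)^2$; suppressing the argument, this is $(3qq''-4(q')^2)/q^2$. To reach the second displayed form I would simply expand $3(q'/q)'=3\bigl(q''q-(q')^2\bigr)/q^2$ together with $(q'/q)^2=(q')^2/q^2$ and observe that their difference equals $(3qq''-4(q')^2)/q^2$.

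Finally \eqref{Eq:prepre1} is immediate: adding $\tfrac14(Pf)^2=\tfrac14(2q'/q)^2=(q'/q)^2$ to the second expression in \eqref{Eq:prepre} cancels the two $(q'/q)^2$ contributions and leaves $3(q'/q)'$.

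There is no genuine obstacle here; the computation is entirely routine. The only points worth a moment's care are (i) checking that the $(z-w)$-terms really do drop out at $w$ at each order of differentiation before one divides, and (ii) noting that it is precisely $f'(w)=q(w)\ne0$ that makes $f''/f'$ analytic near $w$, and hence makes the symbol $Pf'(w)$ meaningful in the first place.
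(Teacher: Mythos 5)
Your proof is correct and is exactly the routine computation the paper intends (the paper gives no proof, citing \cite{CSV} for \eqref{Eq:pre} and saying ``one calculates'' for the rest). All the evaluations $f'(w)=q(w)$, $f''(w)=2q'(w)$, $f'''(w)=3q''(w)$ and the subsequent algebra check out, and your remark that $f'(w)=q(w)\ne0$ is what makes $Pf$ analytic near $w$ is a worthwhile point the paper leaves implicit.
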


\begin{definition}  We  say that $\{\gamma_-,\gamma_+\}$ is a \textsc{strong Lehmer pair} if 
\begin{equation}\label{Eq:SLP}
\Delta^2\left(-P\Xi^\prime(\gamma_+)-P\Xi^\prime(\gamma_-)\right)<\frac{42}{5}.
\end{equation}
\end{definition}

\begin{theorem}\label{Th:1}  The above inequality \eqref{Eq:SLP} is a sufficient condition for $\{\gamma_-,\gamma_+\}$ to be a Lehmer pair.
\end{theorem}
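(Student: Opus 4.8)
The plan is to rewrite $\Xi$ near each of the two consecutive zeros in the form $(t-\gamma)q(t)$, apply formula \eqref{Eq:prepre} of the Lemma, and then express everything as sums over the zeros by means of the Hadamard product. The strong Lehmer inequality \eqref{Eq:SLP} will then collapse, after extracting two explicit $1/\Delta^2$ terms and discarding two nonnegative squares, to exactly the Lehmer inequality \eqref{Def:Lehmer}.

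First I would record the partial fraction expansion coming from the Hadamard factorization $\Xi(t)=\Xi(0)\prod_{\gamma>0}(1-t^2/\gamma^2)$, which gives, as a convergent sum over the real zeros $\gamma$ grouped in $\pm$ pairs, $\Xi^\prime/\Xi(t)=\sum_\gamma 1/(t-\gamma)$. Fixing $\gamma_+$ and setting $q(t)=\Xi(t)/(t-\gamma_+)$, simplicity of the zero gives $q(\gamma_+)\neq0$, and $(q^\prime/q)(t)=\sum_{\gamma\neq\gamma_+}1/(t-\gamma)$, hence $(q^\prime/q)^\prime(\gamma_+)=-\sum_{\gamma\neq\gamma_+}(\gamma_+-\gamma)^{-2}$, the latter now absolutely convergent. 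Substituting into \eqref{Eq:prepre},
\[
P\Xi^\prime(\gamma_+)=-3\sum_{\gamma\neq\gamma_+}\frac{1}{(\gamma_+-\gamma)^2}-\left(\sum_{\gamma\neq\gamma_+}\frac{1}{\gamma_+-\gamma}\right)^{\!2},
\]
and symmetrically with $\gamma_-$ in place of $\gamma_+$.

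Next I would add these two identities and split off the contribution of the partner zero in each quadratic sum: the term $\gamma=\gamma_-$ contributes $1/\Delta^2$ to $\sum_{\gamma\neq\gamma_+}(\gamma_+-\gamma)^{-2}$, and $\gamma=\gamma_+$ contributes $1/\Delta^2$ to the companion sum, so that
\[
\sum_{\gamma\neq\gamma_+}\frac{1}{(\gamma_+-\gamma)^2}+\sum_{\gamma\neq\gamma_-}\frac{1}{(\gamma_--\gamma)^2}=\frac{2}{\Delta^2}+g,
\]
with $g$ exactly the quantity in \eqref{Eq:gkdef}. Therefore
\[
-P\Xi^\prime(\gamma_+)-P\Xi^\prime(\gamma_-)=\frac{6}{\Delta^2}+3g+\left(\sum_{\gamma\neq\gamma_+}\frac{1}{\gamma_+-\gamma}\right)^{\!2}+\left(\sum_{\gamma\neq\gamma_-}\frac{1}{\gamma_--\gamma}\right)^{\!2}\ \geq\ \frac{6}{\Delta^2}+3g,
\]
since the two square terms are nonnegative. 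Multiplying by $\Delta^2$ and invoking \eqref{Eq:SLP} gives $6+3\Delta^2 g\leq \Delta^2(-P\Xi^\prime(\gamma_+)-P\Xi^\prime(\gamma_-))<42/5$, hence $3\Delta^2 g<42/5-6=12/5$, i.e. $\Delta^2 g<4/5$, which is \eqref{Def:Lehmer}.

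The argument is essentially bookkeeping, so I do not expect a serious analytic obstacle; the points that need care are the treatment of the conditionally convergent simple sum $\sum 1/(t-\gamma)$ via the symmetric $\pm$ pairing supplied by the Hadamard product (the squared sums are genuinely finite because $q^\prime/q$ is analytic with nonvanishing denominator at $\gamma_\pm$), and the numerology: one must notice that the constant $42/5$ decomposes as $6+12/5$, so that the slack $12/5$ is precisely $3$ times the Lehmer threshold $4/5$. The real content of the theorem is that the factor $-3$ multiplying the sum of inverse-square gaps inside $P\Xi^\prime$, together with the harmless $6/\Delta^2$ and the nonnegative squares, upgrades the Lehmer inequality by a factor of $3$ up to a correctable additive shift.
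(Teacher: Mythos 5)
Your proof is correct and follows essentially the same route as the paper: apply the pre-Schwarzian lemma to $\Xi(t)=(t-\gamma_\pm)q(t)$ via the Hadamard product, identify $3\sum(\gamma-\gamma_\pm)^{-2}$ summed over both zeros as $6/\Delta^2+3g$, discard the nonnegative square term, and note that $42/5=6+3\cdot(4/5)$. The only cosmetic difference is that you invoke \eqref{Eq:prepre} with the explicit squared zero-sum, whereas the paper packages that same square as $\frac14(P\Xi)^2$ via \eqref{Eq:prepre1} and the auxiliary quantity $G(t)$; these are identical by \eqref{Eq:pre}.
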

\begin{proof}
We have
\[
P\Xi(t)=iP\xi(1/2+it),\quad\text{and}\quad -P\Xi^\prime(t)=P\xi^\prime(1/2+it).
\]
From the Hadamard factorization theorem we have that
\begin{equation}\label{Eq:Hadamard}
\Xi(t)=\Xi(0)\prod_{j=1}^\infty\left(1-\frac{t^2}{\gamma_j^2}\right).
\end{equation}
Fixing $\gamma_+$ we have that
\[
\Xi(t)=(t-\gamma_+)q(t),
\]
where
\[
q(t)=-\frac{\Xi(0)}{\gamma_+}\cdot\left(1+\frac{t}{\gamma_+}\right)\prod_{\substack{j=1\\\gamma_j\ne \gamma_+}}^\infty\left(1-\frac{t^2}{\gamma_j^2}\right).
\]
We  write
\begin{align*}
G(t)\overset{\text{def.}}=&
\frac{\left(3\Xi^{\prime\prime}(t)\right)^2-4\Xi^{\prime\prime\prime}(t)\cdot \Xi^{\prime}(t)}{4\Xi^{\prime}(t)^2}\\
=&-\left(P\Xi^\prime+\frac{1}{4}(P\Xi)^2\right)(t).
\end{align*}
From \eqref{Eq:prepre1} with $w=\gamma_+$ , and analogously with $w=\gamma_-$ we deduce that
\[
G(\gamma_+)+G(\gamma_-)=
\sum_{\gamma_j\ne \gamma_+}\frac{3}{(\gamma_j-\gamma_+)^2}+
\sum_{\gamma_j\ne \gamma_-}\frac{3}{(\gamma_j-\gamma_-)^2}=\frac{6}{\Delta^2}+3g.
\]
Upon rearranging we have that
\[
3\Delta^2g+6=\Delta^2\left(G(\gamma_+)+G(\gamma_-)\right),
\]
and the condition to have a Lehmer pair becomes
\[
\Delta^2\left(G(\gamma_+)+G(\gamma_-)\right)<\frac{42}{5}.
\]
Since $P\Xi(t)$ is real valued for real $t$, $(P\Xi(t))^2$ is positive, and so
\[
G(t)<-P\Xi^\prime(t).
\]
\end{proof}

\section{The zeros of $\zeta^\prime(s)$}\label{S:5}
Figure \ref{F:2} shows the argument of $\zeta^\prime(s)/\zeta(s)$, interpreted as a color\footnote{red - yellow - green - blue, as one travels around the origin counterclockwise.}, in a region which includes Lehmer's example.  The Riemann zeros $1/2+i\gamma_{6709}$ and $1/2+i\gamma_{6710}$ are now poles, while in between we see a zero of $\zeta^\prime(s)$ at 
$0.50062354 + 7005.08185555i$,
very close to the critical line, even on the scale of this close pair of Riemann zeros.  (To see the distinction between poles and zeros, observe the orientation of the colors.)\ \   Another Lehmer pair in the literature, $\{\gamma_{1048449114},\gamma_{1048449115}\}$ was discovered by van de Lune, te Riele, and Winter (see \cite{COSV}).  The corresponding zero of $\zeta^\prime(s)$ lies at 
\[
0.500000013216794+ 
3.888588860023394\cdot10^8 i.\label{vdL}
\]

\begin{figure}
\begin{center}
\includegraphics[scale=1.5, viewport=0 0 150 150,clip]{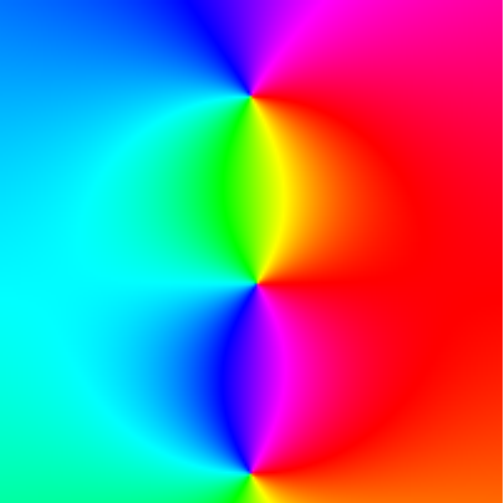}
\caption{Lehmer's example: $\arg(\zeta^\prime(s)/\zeta(s))$ for $0.475\le\sigma\le0.525$, and $7005.6\le t\le7005.11$.}\label{F:2}
\end{center}
\end{figure}

We would like to relate the zeros of $\Xi^\prime$ to those of $\zeta^\prime$.  
With $h(s)=\pi^{s/2}\Gamma(s/2)$, let $\eta(s)=h(s)\zeta^\prime(s)$; the zeros of $\zeta^\prime$ are the zeros of $\eta$.  
\begin{theorem}\label{Th:2}  For a zero $\rho=1/2+i\gamma$ of $\zeta(s)$,
\begin{multline}\label{Eq:PXi}
-P\Xi^\prime(\gamma)=
\tre\left(\frac{\eta^\prime}{\eta}\right)^\prime(\rho)
+\tre^2\left(\frac{h^\prime}{h}(\rho)\right)\\
+\tim\left(\frac{\eta^\prime}{\eta}(\rho)\right)\tim\left(\frac{h^\prime}{h}(\rho)\right)
+2\tre\left(\left(\frac{h^\prime}{h}\right)^\prime(\rho)\right).
\end{multline}
\end{theorem}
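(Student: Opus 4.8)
The plan is to reduce the asserted identity to a local computation at $\rho$, starting from the factorizations $\xi = h\zeta$ and $\eta = h\zeta'$. Two structural facts will drive the argument. First, the proof of Theorem~\ref{Th:1} already gives $-P\Xi'(\gamma) = P\xi'(\rho)$ and $P\Xi(t) = iP\xi(1/2+it)$. Second, $\Xi$ is real-analytic on $\mathbb{R}$, so every $\Xi^{(k)}(\gamma)$ is real; since $\gamma$ is simple, this makes both $P\Xi(\gamma) = \Xi''(\gamma)/\Xi'(\gamma)$ and $P\Xi'(\gamma) = \Xi'''(\gamma)/\Xi'(\gamma) - (\Xi''(\gamma)/\Xi'(\gamma))^2$ real numbers. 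Combining these, $P\xi(\rho) = -iP\Xi(\gamma)$ is purely imaginary while $-P\Xi'(\gamma) = P\xi'(\rho)$ is real, so it suffices to compute $P\xi'(\rho)$ and read off its real part.

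The next step is to expand $\xi',\xi'',\xi'''$ by the product rule from $\xi = h\zeta$ and impose $\zeta(\rho) = 0$, which gives $\xi'(\rho) = h(\rho)\zeta'(\rho) = \eta(\rho)$, $\xi''(\rho) = 2h'(\rho)\zeta'(\rho) + h(\rho)\zeta''(\rho)$, and $\xi'''(\rho) = 3h''(\rho)\zeta'(\rho) + 3h'(\rho)\zeta''(\rho) + h(\rho)\zeta'''(\rho)$. Abbreviating $a = (h'/h)(\rho)$ and $b = (\eta'/\eta)(\rho)$ (so $(\zeta''/\zeta')(\rho) = b - a$, since $\eta = h\zeta'$), dividing through by $\xi'(\rho)$ gives $\xi''(\rho)/\xi'(\rho) = a + b$, and substituting $h''/h = (h'/h)' + (h'/h)^2$ and, likewise, $\zeta'''/\zeta' = (\zeta''/\zeta')' + (\zeta''/\zeta')^2$ converts $\xi'''(\rho)/\xi'(\rho)$ into $2(h'/h)'(\rho) + (\eta'/\eta)'(\rho) + a^2 + ab + b^2$. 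Since $P\xi'(\rho) = \xi'''(\rho)/\xi'(\rho) - (\xi''(\rho)/\xi'(\rho))^2$ and $(a+b)^2 = a^2 + 2ab + b^2$, the quadratic terms nearly cancel, leaving the clean identity
\[
P\xi'(\rho) = 2(h'/h)'(\rho) + (\eta'/\eta)'(\rho) - (h'/h)(\rho)\,(\eta'/\eta)(\rho).
\]

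It then remains to take real parts. The only term needing care is $-\tre(ab) = -\tre(a)\tre(b) + \tim(a)\tim(b)$, and this is where the first structural fact is used: $P\xi(\rho) = \xi''(\rho)/\xi'(\rho) = a + b$ is purely imaginary, so $\tre(a) + \tre(b) = 0$, whence $-\tre(ab) = \tre(a)^2 + \tim(a)\tim(b)$. Feeding this back into the displayed identity, and recalling $-P\Xi'(\gamma) = P\xi'(\rho)$, reproduces \eqref{Eq:PXi} exactly. The argument is essentially bookkeeping apart from this reality input, which is the point I would be most careful about: it is precisely the vanishing of $\tre(P\xi(\rho))$ that turns the mixed product $-\tre(a)\tre(b)$ into the square $\tre^2((h'/h)(\rho))$ in the statement. (One should also note at the outset that $\rho$ is taken simple, i.e.\ $\zeta'(\rho)\ne0$; this is what makes $\eta(\rho)\ne0$ and $\xi'(\rho)\ne0$, so that $P\xi'(\rho)$ and $(\eta'/\eta)'(\rho)$ are well-defined in the first place.)
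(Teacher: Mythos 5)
Your proof is correct and follows essentially the same route as the paper: reduce to showing $-P\Xi'(\gamma)=P\xi'(\rho)$ is real, establish the key identity $P\xi'(\rho)=\left(\frac{\eta'}{\eta}\right)'(\rho)-\frac{\eta'}{\eta}(\rho)\frac{h'}{h}(\rho)+2\left(\frac{h'}{h}\right)'(\rho)$, and take real parts using that $P\xi(\rho)=\frac{\eta'}{\eta}(\rho)+\frac{h'}{h}(\rho)$ is purely imaginary to turn $-\tre(a)\tre(b)$ into $\tre(a)^2$. The only difference is that you derive the key identity by hand, via the Leibniz expansion of $\xi=h\zeta$ at the zero, whereas the paper differentiates the general expression for $P\xi(s)$ symbolically (with \emph{Mathematica}) before specializing to $\zeta(\rho)=0$ --- a harmless, arguably cleaner, piece of bookkeeping.
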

\begin{proof}
From
\[
\xi^\prime(s)=\eta(s)\left(\frac{h^\prime(s)\zeta(s)}{h(s)\zeta^\prime(s)}+1\right),
\]
we have
\begin{multline*}
P\xi(s)=\frac{\eta^\prime(s)}{\eta(s)}+\frac{h'(s) \zeta '(s)}{\zeta (s) h'(s)+h(s) \zeta '(s)}\\
-\zeta (s)\cdot \frac{h(s) h'(s) \zeta ''(s)+\zeta '(s) \left(h'(s)^2-h(s) h''(s)\right)}{h(s) \zeta '(s) \left(\zeta (s)
   h'(s)+h(s) \zeta '(s)\right)}.
\end{multline*}
When evaluated at $\rho=1/2+i\gamma$ this simplifies to
\[
P\xi(\rho)=\frac{\eta^\prime}{\eta}(\rho)+\frac{h^\prime}{h}(\rho).
\]
Similarly, (and with the help of \emph{Mathematica}),
\begin{equation}\label{Eq:mess}
P\xi^\prime(\rho)=\left(\frac{\eta^\prime}{\eta}\right)^\prime(\rho)-\frac{\eta^\prime}{\eta}(\rho)\frac{h^\prime}{h}(\rho)
+2\left(\frac{h^\prime}{h}\right)^\prime(\rho).
\end{equation}

Since $P\xi$ is purely imaginary on the critical line,
\[
\tre\left(\frac{\eta^\prime}{\eta}(\rho)\right)=-\tre\left(\frac{h^\prime}{h}(\rho)\right).
\]
Meanwhile $-P\Xi^\prime(\gamma)=P\xi^\prime(\rho)$ is real.  Taking real parts in \eqref{Eq:mess} gives the result.
\end{proof}

The following lemma, approximating the logarithmic derivative of a function $f$ by a sum over its zeros $\rho^\prime$,  is an adaptation of  \cite[Lemma ($\alpha$)]{Tit} for our purposes, with the constants made explicit.  
\begin{lemma}[$\alpha$]
Let $s_0$ in $\mathbb C$, and $\mathcal D$ denote the disk of radius $r$ centered at $s_0$.  Let $\mathcal R$ denote a rectangle in $\mathbb C$ with the property that all of the zeros $\rho^\prime$ in $\mathcal D$ of $f$ belong to $\mathcal R$:
\[
f(\rho^\prime)=0\text{ and }\rho^\prime\in \mathcal D\Rightarrow \rho^\prime\in \mathcal R.
\]
Fix
\[
B\ge\max_{\rho^\prime \in\mathcal R}|s_0-\rho^\prime|,
\]
and let
\[
\Gamma=\partial\left\{s\in \mathbb C\,|\, \text{dist}(s,\mathcal R\cup\mathcal D)\le B\right\}.
\]
If $f(s)$ is regular and
\[
\left|\frac{f(s)}{f(s_0)}\right| < \exp(M)
\]
for $s$ on $\Gamma$, then for $|s-s_0|\le 2r/3$
\begin{gather*}
\left|\frac{f^\prime(s)}{f(s)}-\sum_{\rho^\prime\in\mathcal R} \frac{1}{s-\rho^\prime}\right| <\frac{30M}{r},\\
\left|\left(\frac{f^\prime(s)}{f(s)}\right)^\prime+\sum_{\rho^\prime\in\mathcal R} \frac{1}{(s-\rho^\prime)^2}\right| <\frac{90M}{r^2}.
\end{gather*}
\end{lemma}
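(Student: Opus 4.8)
The plan is to follow the classical route behind Titchmarsh's Lemma $(\alpha)$ --- the Poisson--Jensen formula together with the Borel--Carath\'eodory inequality and a Jensen zero-count --- while keeping every constant explicit. Translate so that $s_0=0$. The first observation is that the region enclosed by $\Gamma$ contains the $B$-neighbourhood of $\mathcal D$, hence contains the closed disk of radius $r+B$ about $0$; so the maximum modulus principle upgrades the hypothesis $|f/f(0)|<e^M$ on $\Gamma$ to $|f/f(0)|<e^M$ on that whole disk, in particular on the circles $|s|=r$ and $|s|=r+B$. Let $\rho'_1,\dots,\rho'_n$ be the zeros of $f$ in $\mathcal D$ (finitely many, as $f$ is regular and $\not\equiv 0$; all lying in $\mathcal R$ by hypothesis). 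Poisson--Jensen on $\mathcal D$ gives, up to an additive imaginary constant, $\log f(s)=H(s)-\sum_{j=1}^{n}\log\frac{r^{2}-\overline{\rho'_j}s}{r(s-\rho'_j)}$ with $H$ holomorphic on the open disk and $\operatorname{Re}H$ equal to the Poisson integral of $\log|f|$ over $|s|=r$; differentiating once and twice,
\begin{align*}
\frac{f'}{f}(s) &= H'(s) + \sum_{j=1}^{n}\frac{1}{s-\rho'_j} + \sum_{j=1}^{n}\frac{\overline{\rho'_j}}{r^{2}-\overline{\rho'_j}s},\\
\Bigl(\frac{f'}{f}\Bigr)'(s) &= H''(s) - \sum_{j=1}^{n}\frac{1}{(s-\rho'_j)^{2}} + \sum_{j=1}^{n}\frac{\overline{\rho'_j}^{2}}{(r^{2}-\overline{\rho'_j}s)^{2}}.
\end{align*}

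Next I would normalise and estimate. Setting $\Psi=H-H(0)$ one has $\Psi(0)=0$; since $\operatorname{Re}H$ is a Poisson integral it is at most $\max_{|s|=r}\log|f|<M+\log|f(0)|$, while Jensen's formula gives $H(0)=\log|f(0)|+\sum_{j}\log(r/|\rho'_j|)\ge\log|f(0)|$, so $\operatorname{Re}\Psi<M$ on $\mathcal D$. The Borel--Carath\'eodory coefficient bound $|c_k|\,r^{k}\le 2M$ for $\Psi(s)=\sum_{k\ge 1}c_k s^{k}$ then yields, for $|s|\le 2r/3$,
\[
|H'(s)|\le\frac{2Mr}{(r-|s|)^{2}}\le\frac{18M}{r},\qquad |H''(s)|\le\frac{4Mr}{(r-|s|)^{3}}\le\frac{108M}{r^{2}},
\]
the value $2/3$ being chosen so that the first estimate comes out as a round $18$. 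For the zero-count: every zero in $\mathcal D$ lies within $\min(B,r)$ of $0$ (within $r$ trivially, within $B$ since it lies in $\mathcal R$), and $(r+B)/\min(B,r)\ge 2$, so Jensen's formula on $|s|=r+B$ gives $n\le M/\log 2<1.45\,M$; the same device, run dyadically, controls the finitely many zeros of $\mathcal R\setminus\mathcal D$.

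It then remains to assemble. Subtracting $\sum_{\rho'\in\mathcal R}\frac{1}{s-\rho'}$ from the first identity leaves $H'(s)$, the Blaschke correction $\sum_j\overline{\rho'_j}/(r^{2}-\overline{\rho'_j}s)$ (each term $\le 3/r$ when $|s|\le 2r/3$, since $|r^{2}-\overline{\rho'_j}s|\ge r^{2}/3$), and $-\sum_{\rho'\in\mathcal R\setminus\mathcal D}\frac{1}{s-\rho'}$ (each term $\le 3/r$, as such $\rho'$ satisfy $|\rho'|>r$); a short calculation with the counts above shows the total is below $30M/r$. For the second inequality one cannot afford the crude $|H''|\le 108M/r^{2}$ by itself: instead one keeps $H''(s)+\sum_j\overline{\rho'_j}^{2}/(r^{2}-\overline{\rho'_j}s)^{2}$ together --- this is $(F'/F)'(s)$ for the deflated function $F=f/\prod_j(s-\rho'_j)$, and the two pieces largely cancel (exactly, for instance, when $f$ has a single high-order zero) --- and uses the extra room provided by the buffer $B$ (which improves the Borel--Carath\'eodory input, $f$ being regular on the larger disk of radius $r+B$) to bring the constant down to $90$.

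The step I expect to be the main obstacle is exactly this quantitative one: squeezing everything down to the clean constants $30$ and $90$, rather than unspecified absolute constants, forces one to optimise the radius in the Borel--Carath\'eodory step, to exploit the near-cancellation between the analytic term and the Blaschke correction instead of losing it to the triangle inequality, and to run the Jensen count dyadically so that zeros far from $s_0$ contribute only $O(M/r)$ uniformly in the size of $\mathcal R$. Everything else is routine complex analysis.
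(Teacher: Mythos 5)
Your route (Poisson--Jensen on $\mathcal D$, Blaschke corrections, Jensen zero counts) is genuinely different from the paper's, and as written it does not close. The decisive gaps are the quantitative ones you flag yourself without resolving. First, your own accounting already gives $|H''(s)|\le 108M/r^2$ at $|s-s_0|=2r/3$, which exceeds the target $90M/r^2$ before you have added the Blaschke terms or the zeros of $\mathcal R\setminus\mathcal D$; the proposed rescue --- that $H''$ and the Blaschke correction ``largely cancel'' because together they form $(F'/F)'$ for the deflated $F$, and that the buffer $B$ lets you run Borel--Carath\'eodory on the disk of radius $r+B$ --- is asserted, not proved, and fails as stated, since $F$ (deflated only by the zeros in $\mathcal D$) may vanish in the annulus $r<|s-s_0|<r+B$ at points outside $\mathcal R$, so $\log F$ need not be regular on the larger disk. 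Second, each zero of $\mathcal R\setminus\mathcal D$ contributes up to $3/r$ (resp.\ $9/r^2$) to your error terms, so you need their number to be at most a small multiple of $M$; these zeros lie at distance between $r$ and $B$ from $s_0$, and when $B\gg r$ Jensen on the circle of radius $r+B$ gains only $\log\bigl((r+B)/B\bigr)\approx r/B$ per such zero and therefore does not count them, while circles of radius comparable to $2B$ may leave the region bounded by $\Gamma$ on which you control $|f/f(s_0)|$. The ``dyadic'' fix is not carried out and is not obviously available under the stated hypotheses.

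The hypotheses involving $B$ and $\Gamma$, which you use only to invoke the maximum principle, are designed precisely to avoid all of this. The paper deflates by \emph{all} zeros of $\mathcal R$ at once, setting $g(s)=f(s)\prod_{\rho'\in\mathcal R}(s-\rho')^{-1}$, and observes that on $\Gamma$ one has $|s-\rho'|\ge B\ge|s_0-\rho'|$ for every $\rho'\in\mathcal R$, whence $|g(s)/g(s_0)|\le|f(s)/f(s_0)|<e^{M}$ on $\Gamma$ and hence inside it. Then $h=\log\bigl(g(s)/g(s_0)\bigr)$ is regular and single-valued on $\mathcal D$ with $h(s_0)=0$ and $\mathrm{Re}\,h<M$, Borel--Carath\'eodory bounds $|h|$ on a slightly smaller disk, and Cauchy's integral formula for $h'$ and $h''$ gives both inequalities in one stroke --- no Blaschke terms, no zero counting, no separate treatment of $\mathcal R\setminus\mathcal D$. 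If you want to salvage your write-up, the one structural change to make is to fold the zeros of $\mathcal R\setminus\mathcal D$ into the deflation from the start and replace Poisson--Jensen by the $\Gamma$/$B$ inequality above; the explicit constants then fall out of Borel--Carath\'eodory plus Cauchy alone.
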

\begin{proof} The function
\[
g(s)=f(s)\prod_{\rho^\prime\in\mathcal R}(s-\rho^\prime)^{-1}
\]
is regular inside $\Gamma$ and not zero inside $\mathcal D$.  On  $\Gamma$, $|s-\rho^\prime|\ge B\ge |s_0-\rho^\prime|$, so
\[
\left|\frac{g(s)}{g(s_0)}\right|=\left|\frac{f(s)}{f(s_0)}\prod\left(\frac{s_0-\rho^\prime}{s-\rho^\prime}\right)\right|\le \left|\frac{f(s)}{f(s_0)}\right|<\exp(M).
\]
This inequality therefore holds inside $\Gamma$ also.  Hence the function
\[
h(s)=\log\left(\frac{g(s)}{g(s_0)}\right),
\]
where the logarithm is zero at $s=s_0$, is regular for $s\in\mathcal D$, and
\[
h(s_0)=0,\qquad\tre(h(s))<M.
\]
By the Borel-Carath\'eodory Theorem, for $|s-s_0|\le 5r/6$
\[
|h(s)| \le \frac{2\cdot 5r/6}{r-5r/6}M = 10M,
\]
and so, for $|s-s_0|\le 2r/3$,
\begin{gather*}
|h^\prime(s)|=\left|\frac{1}{2\pi i}\int_{|z-s|=r/3}\frac{h(z)}{(z-s)^2}\, dz\right|\le \frac{30M}{r},\\
|h^{\prime\prime}(s)|=\left|\frac{2}{2\pi i}\int_{|z-s|=r/3}\frac{h(z)}{(z-s)^3}\, dz\right|\le \frac{90M}{r^2}.
\end{gather*}
\end{proof}

The following is a slight generalization of \cite[(14.15.2)]{Tit}.
\begin{lemma}  Let  $s=1/2+i t$ with $|t-t_0|\le1/\log(t_0)$.
Then
\begin{gather}
\frac{\eta^\prime(s)}{\eta(s)}=\sum_{|\gamma^\prime-t_0|\le 1/\log\log(t_0)}\frac{1}{s-\rho^\prime}+O(\log t_0),\label{Eq:impart}\\
\left(\frac{\eta^\prime(s)}{\eta(s)}\right)^\prime=-\sum_{|\gamma^\prime-t_0|\le 1/\log\log t_0}\frac{1}{(s-\rho^\prime)^2}+O(\log t_0\log\log t_0).\label{Eq:realpart}
\end{gather}
\end{lemma}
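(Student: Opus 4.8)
The plan is to apply Lemma~($\alpha$) to $f=\eta=h\zeta^\prime$, whose zeros near height $t_0$ are exactly the zeros of $\zeta^\prime$ there (the factor $h$ is zero-free and pole-free in that region). I would take $s_0=\sigma_1+it_0$ for a fixed large constant $\sigma_1$, so that $|\zeta^\prime(s_0)|\gg 1$ from its Dirichlet series and $|h(s_0)|$ has the standard Stirling size; take $\mathcal D$ the disk about $s_0$ of an absolute radius $r$ large enough that every $s=1/2+it$ with $|t-t_0|\le 1/\log t_0$ satisfies $|s-s_0|\le 2r/3$; take $\mathcal R$ a fixed bounded rectangle about $1/2+it_0$ containing all zeros of $\zeta^\prime$ in $\mathcal D$; and $B$ an absolute constant. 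To bound $M$ one uses $|\zeta^\prime(s)|\ll t_0^{A}$ on $\Gamma$ (convexity together with the functional equation), $\log|h(s)/h(s_0)|=O(\log t_0)$ by Stirling (over the $O(1)$-sized contour $\Gamma$ the dominant variation of $\log|h|$ is $O(\log t_0)$, coming from the term $\tfrac12\sigma\log(\pi t/2)$ as $\sigma$ ranges over the contour), and $|\zeta^\prime(s_0)|\gg 1$; this gives $M=O(\log t_0)$. Lemma~($\alpha$) then produces, for some absolute constant $C$, the formulas $\frac{\eta^\prime}{\eta}(s)=\sum_{|\gamma^\prime-t_0|\le C}(s-\rho^\prime)^{-1}+O(\log t_0)$ and $(\eta^\prime/\eta)^\prime(s)=-\sum_{|\gamma^\prime-t_0|\le C}(s-\rho^\prime)^{-2}+O(\log t_0)$.

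It then remains to shrink the truncation from $|\gamma^\prime-t_0|\le C$ to $|\gamma^\prime-t_0|\le 1/\log\log t_0$. On the Riemann Hypothesis every zero $\rho^\prime=\beta^\prime+i\gamma^\prime$ of $\zeta^\prime$ near height $t_0$ lies just to the right of the critical line, $0\le\beta^\prime-1/2\ll\log\log t_0/\log t_0$, so $|s-\rho^\prime|\ge|t-\gamma^\prime|$, the two being comparable. For the derivative formula this already suffices: a dyadic decomposition of $\sum_{1/\log\log t_0<|\gamma^\prime-t_0|\le C}|t-\gamma^\prime|^{-2}$, using the estimate $\#\{\gamma^\prime:|\gamma^\prime-t_0|\le x\}\ll x\log t_0+\log t_0/\log\log t_0$ --- whose error term is of the quality furnished by RH through the analogue for $\zeta^\prime$ of Littlewood's bound for the argument of $\zeta$ --- gives truncation error $O(\log t_0\log\log t_0)$, which is \eqref{Eq:realpart}.

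For \eqref{Eq:impart} the term-by-term bound gives only $O(\log t_0\log\log t_0)$, so cancellation between the zeros above and below $t$ must be used. In $\sum_{1/\log\log t_0<|\gamma^\prime-t_0|\le C}(s-\rho^\prime)^{-1}$ the real parts contribute at most $O((\log\log t_0)^3)$ (again using $\beta^\prime-1/2\ll\log\log t_0/\log t_0$), while the imaginary part equals, up to $o(1)$, $-\sum_{1/\log\log t_0<|t-\gamma^\prime|\le C}(t-\gamma^\prime)^{-1}$. Writing this as a Stieltjes integral $\int_{1/\log\log t_0}^{C}y^{-1}\,dD(y)$ with $D(y)=2N^\prime(t)-N^\prime(t-y)-N^\prime(t+y)$, and inserting $N^\prime(T)=\widetilde{G}(T)+E(T)$ with $\widetilde{G}$ the smooth main term (Berndt) and $E(T)\ll\log T/\log\log T$ on RH, one gets $D(y)\ll\log t_0/\log\log t_0$ uniformly for $0<y\le C$; integration by parts then bounds the integral by $O(\log t_0)$. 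This uniform second-difference estimate for the zero-counting function of $\zeta^\prime$ is the step I expect to be the main obstacle: one either invokes it from the literature on the vertical distribution of zeros of $\zeta^\prime$, or derives it by transferring Littlewood's bound for $\zeta$ through the identity $\zeta^\prime=\zeta\cdot(\zeta^\prime/\zeta)$ together with an argument-principle count of the zeros of $\zeta^\prime/\zeta$ on short horizontal segments. Combining the two truncations, and noting that $1/\log t_0\ll 1/\log\log t_0$ so that $s$ lies comfortably inside the disk of radius $2r/3$, finishes the proof.
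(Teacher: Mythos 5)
Your opening move diverges from the paper's: the paper applies Lemma~($\alpha$) with a \emph{small} disk of radius $r\asymp 1/\log\log t_0$ centered at $s_0=1/2-1/(\sqrt3\log\log t_0)+it_0$, so that the resulting sum is already over $|\gamma^\prime-t_0|\le 1/\log\log t_0$ and no truncation is needed; the entire difficulty is then concentrated in the lower bound $|\zeta^\prime(s_0)|\ge\exp(-A\log t_0/\log\log t_0)$ at a point very close to the critical line, which the paper gets from the functional equation together with the RH-conditional bounds of Titchmarsh (Theorems 14.5 and 14.14B), yielding $M\asymp\log t_0/\log\log t_0$ and hence errors $30M/r\ll\log t_0$ and $90M/r^2\ll\log t_0\log\log t_0$. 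Your choice of $s_0$ far to the right makes the application of Lemma~($\alpha$) easy (and your bound $M=O(\log t_0)$ there is fine), but it shifts all the difficulty into the truncation from $|\gamma^\prime-t_0|\le C$ down to $|\gamma^\prime-t_0|\le 1/\log\log t_0$, and that step as written has genuine gaps.

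Concretely: (1) Your assertion that \emph{every} zero of $\zeta^\prime$ near height $t_0$ satisfies $\beta^\prime-1/2\ll\log\log t_0/\log t_0$ is false; this is the Levinson--Montgomery bound \emph{on average}, and individual zeros of $\zeta^\prime$ in $|\gamma^\prime-t_0|\le C$ can have $\beta^\prime-1/2\asymp 1$. Without it, the real parts in your truncated tail are only bounded by $\sum|t-\gamma^\prime|^{-1}\ll\log t_0\log\log t_0$, which is too large for \eqref{Eq:impart}. (2) Both your dyadic bound for \eqref{Eq:realpart} and your Stieltjes-integral cancellation argument for \eqref{Eq:impart} require the short-interval/second-difference counting estimate for zeros of $\zeta^\prime$ with error $O(\log t_0/\log\log t_0)$, i.e.\ a Littlewood-quality bound for the analogue of $S(T)$ for $\zeta^\prime$. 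You flag this as the main obstacle but do not supply it, and it is not a routine transfer: the best conditional error term known for Berndt's counting function $N_1(T)$ is weaker (of the shape $O(\log T/\sqrt{\log\log T})$), and plugging that into your dyadic sum gives $\log t_0(\log\log t_0)^{3/2}$, which already fails for \eqref{Eq:realpart}. So the proposal does not close; the paper's device of shrinking the disk (and paying for it in the lower bound on $|\zeta^\prime(s_0)|$, where RH enters) is precisely what avoids needing any vertical-distribution input for the zeros of $\zeta^\prime$.
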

\begin{proof}
In Lemma $(\alpha)$ we take $f(s)=\eta(s)$ and
\[
s_0=1/2-1/(\sqrt{3}\log\log t_0)+i t_0,\quad r=2/(\sqrt3\log\log t_0).
\]
We let $\mathcal R$ be the rectangle
\[
\left\{s\in\mathbb C\,|\, 1/2\le \sigma\le 1/2+1/\log\log(t_0), |t-t_0|\le 1/\log\log(t_0)  \right\},
\]
which contains all the zeros of $\zeta^\prime(s)$ inside $\mathcal D$, and we can take $B=2/\log\log(t_0)$.  Via the symmetry of $\mathcal R\cup\mathcal D$ around the horizontal line $t=\gamma_0^\prime$, we see that $\Gamma$ is contained in the vertical strip 
\[
\{1/2-3/(2\log\log(t_0))\le\sigma\le 1/2+3/\log\log(t_0)\}.
\]
In this strip we have
\[
\left|\frac{\Gamma(s/2)}{\Gamma(s_0/2)}\right|\ll t^{\tre(s)-\tre(s_0)}\ll \exp(4\log(t_0)/\log\log(t_0))
\]
by Stirling's formula.  The proof of \cite[Theorem 14.5]{Tit} gives, for some $A>0$,
\[
|\zeta(s)|\ll \exp(A\log(t_0)/\log\log(t_0)),
\]
 and by the Cauchy Integral Formula for $\zeta^\prime$ (taking a circle of radius $1/\log\log(t_0)$ around $s$), gives the same for $|\zeta^\prime(s)|$ inside the curve $\Gamma$.

From the functional equation we have
\begin{equation}\label{Eq:zetapFE}
\zeta^\prime(s_0)=\chi(s_0)\zeta(1-s_0)\left(\frac{\chi^\prime(s_0)}{\chi(s_0)}-\frac{\zeta^\prime(1-s_0)}{\zeta(1-s_0)}\right).
\end{equation}
From Stirling's formula we have
\begin{gather*}
|\chi(s_0)|\sim\left(t_0/2\pi\right)^{1/\sqrt{3}\log\log t_0}\sim\exp(\log(t_0)/(\sqrt{3}\log\log(t_0)),\\
\left|\frac{\chi^\prime(s_0)}{\chi(s_0)}\right|\sim\log(t_0/2\pi),
\end{gather*}
while \cite[Theorem 14.14 B]{Tit} gives for some $A>0$
\[
|\zeta(1-s_0)|\ge\exp(-A\log(t_0)/\log\log(t_0)).
\]
Meanwhile, \cite[(14.14.5)]{Tit} and the Cauchy Integral formula for derivatives (on a circle of radius $1/4\log\log(t_0)$) gives
\[
\left|\frac{\zeta^\prime(1-s_0)}{\zeta(1-s_0)}\right|\ll \log(t_0)^{1/\log\log(t_0)}.
\]
Thus from \eqref{Eq:zetapFE} we deduce that for some $A>0$,
\[
|\zeta^\prime(s_0)|\ge\exp(-A\log(t_0)/\log\log(t_0)),
\]
and we may chose $M=A\log (t_0)/\log\log(t_0)$ in Lemma ($\alpha$).
\end{proof}

Meanwhile, Stirling's formula gives that
\begin{equation}\label{Eq:Stirling1}
\frac{h^\prime}{h}(\rho)=\frac{1}{2}\log\left(\frac{\gamma}{2\pi}\right)+\frac{\pi}{4}i+O\left(\frac{1}{\gamma}\right),\qquad\left(\frac{h^\prime}{h}\right)^\prime(\rho)\ll\frac{1}{\gamma}.
\end{equation}

Let  $\rho_0^\prime=\beta_0^\prime+i\gamma_0^\prime$ a fixed zero of $\zeta^\prime(s)$ as above with the added assumption that
\[
\beta_0^\prime<1/2+1/\log(\gamma_0^\prime).
\]
Let $\gamma_-$ and $\gamma_+$ be the imaginary parts of successive zeros $\rho_-$ and $\rho_+$ of $\zeta(s)$ such that
\[
\gamma_-<\gamma_0^\prime<\gamma_+.
\]
By \cite[Proposition 1.6]{Sound}, $\rho_0^\prime$ is the unique zero of $\zeta^\prime(s)$ in the region
\[
1/2<\sigma<1/2+1/\log(\gamma_0^\prime),\qquad \gamma_-<t<\gamma_+.
\]
Plugging \eqref{Eq:impart}, \eqref{Eq:realpart}, and \eqref{Eq:Stirling1} into \eqref{Eq:PXi} we obtain

\begin{theorem}\label{Th:3}  We have
\begin{multline}\label{Eq:newPXi}
-P\Xi^\prime(\gamma_+)=
-\tre\sum_{|\gamma^\prime-\gamma_0^\prime|\le 1/\log\log(\gamma_0^\prime)}\frac{1}{(\rho_+-\rho^\prime)^2}+O\left(\log(\gamma_0^\prime)\right)\\
+\left(\tim\sum_{|\gamma^\prime-\gamma_0^\prime|\le 1/\log\log(\gamma_0^\prime)}\frac{1}{\rho_+-\rho^\prime}+O\left(\log(\gamma_0^\prime)\log\log(\gamma_0^\prime)\right)\right)\left(\frac{\pi}{4}+O\left(\frac{1}{\gamma_0^\prime}\right)\right)\\
+\left(\frac{1}{2}\log\left(\frac{\gamma_0^\prime}{2\pi}\right)+O\left(\frac{1}{\gamma_0^\prime}\right)\right)^2,
\end{multline}
and analogously for $-P\Xi^\prime(\gamma_-)$.
\end{theorem}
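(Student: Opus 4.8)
\emph{Proof idea.} I would read Theorem~\ref{Th:3} as a substitution of the three approximations \eqref{Eq:impart}, \eqref{Eq:realpart} and \eqref{Eq:Stirling1} into the exact identity \eqref{Eq:PXi} of Theorem~\ref{Th:2}, evaluated at $\rho=\rho_+=1/2+i\gamma_+$ (and afterwards symmetrically at $\rho_-$). The four summands on the right of \eqref{Eq:PXi} get handled in turn. The term $\tre(\eta^\prime/\eta)^\prime(\rho_+)$ is replaced by \eqref{Eq:realpart}, which converts it into $-\tre\sum 1/(\rho_+-\rho^\prime)^2$ plus a logarithmic error. The terms $\tre^2(h^\prime/h(\rho_+))$ and $2\tre(h^\prime/h)^\prime(\rho_+)$ are evaluated by Stirling through \eqref{Eq:Stirling1}: the first gives the squared-logarithm line of \eqref{Eq:newPXi}, the second is $O(1/\gamma_+)$. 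The cross term $\tim(\eta^\prime/\eta(\rho_+))\,\tim(h^\prime/h(\rho_+))$ is treated by applying \eqref{Eq:impart} to the first factor and the imaginary part of \eqref{Eq:Stirling1} to the second.

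Before substituting I would verify that the Lemma underlying \eqref{Eq:impart}--\eqref{Eq:realpart} is applicable at $s=\rho_\pm$ with $t_0=\gamma_0^\prime$; its proof (via Lemma~$(\alpha)$ and Borel--Carath\'eodory) in fact delivers those approximations for $|t-t_0|$ up to a fixed constant times $1/\log\log t_0$, and since $\gamma_-<\gamma_0^\prime<\gamma_+$ gives $|\gamma_\pm-\gamma_0^\prime|<\Delta$, this range is comfortably respected once $\gamma_0^\prime$ is large (and a fortiori for the close pairs that matter here). I would then record the reductions that make the three lines of \eqref{Eq:newPXi} uniform in $\gamma_0^\prime$: because $\gamma_\pm\asymp\gamma_0^\prime$ with $\Delta$ small, each $O(1/\gamma_\pm)$ becomes $O(1/\gamma_0^\prime)$ and $\tfrac12\log(\gamma_\pm/2\pi)=\tfrac12\log(\gamma_0^\prime/2\pi)+O(1/\gamma_0^\prime)$. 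Finally, by \cite[Proposition~1.6]{Sound} the zero $\rho_0^\prime$ is the only zero of $\zeta^\prime$ with $\gamma_-<t<\gamma_+$ in $1/2<\sigma<1/2+1/\log\gamma_0^\prime$, so no $\rho^\prime$ in the sums equals $\rho_\pm$ and the terms $1/(\rho_\pm-\rho^\prime)$ and $1/(\rho_\pm-\rho^\prime)^2$ make sense.

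The rest is bookkeeping: expand $\tim(\eta^\prime/\eta(\rho_+))\,\tim(h^\prime/h(\rho_+))=\bigl(\tim\sum 1/(\rho_+-\rho^\prime)+O(\log\gamma_0^\prime)\bigr)\bigl(\pi/4+O(1/\gamma_0^\prime)\bigr)$, keep it in the factored form displayed in \eqref{Eq:newPXi} so that the cross terms and the Stirling errors are absorbed, and add the three groups together; the deliberately generous error $O(\log\gamma_0^\prime\log\log\gamma_0^\prime)$ is there to soak up whatever is not written explicitly, and the identical computation with $\rho_-$ in place of $\rho_+$ yields the stated analogue. The step I would spell out most carefully — and which I view as the only genuine obstacle — is the applicability of the Lemma at $s=\rho_\pm$; everything downstream of that is routine asymptotic algebra.
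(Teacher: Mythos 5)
Your proposal is exactly the paper's argument: the paper's entire proof consists of the sentence ``Plugging \eqref{Eq:impart}, \eqref{Eq:realpart}, and \eqref{Eq:Stirling1} into \eqref{Eq:PXi} we obtain'' the displayed formula, which is precisely the substitution you carry out. You are in fact more careful than the paper about the applicability of the lemma at $s=\rho_\pm$ with $t_0=\gamma_0^\prime$ and about replacing $\gamma_\pm$ by $\gamma_0^\prime$ in the error terms, so nothing is missing.
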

\begin{remark}
When \eqref{Eq:newPXi} is multiplied by $\Delta^2$, the error terms are $o(1)$.
\end{remark}
\section{Discussion}\label{S:6}
\subsection*{Series expansions}
To understand the contributions of the various terms in \eqref{Eq:newPXi}, we need to think about the location of the zero $\rho_0^\prime$ of $\zeta^\prime(s)$ relative to $\rho_+$ and $\rho_-$ (notation as above.).  There is a lot in the literature about the relation between zeros of $\zeta(s)$ and the real parts of zeros of $\zeta^\prime(s)$, but not so much about the imaginary parts.

Following the methods of \cite{Duenez}, we will develop an argument to support our expectation that when 
$(\gamma_+-\gamma_-)\log(\gamma_0^\prime)/(2\pi)$,
the normalized gap,  is small (notation as above), then 
\[
\gamma_0^\prime\approx (\gamma_++\gamma_-)/2.
\]
We define $t_0$, $X$, $Y$, and $\lambda$ via\footnote{in \cite[\S 6]{Duenez}, $Y$ is \emph{assumed} to be $0$.}
\begin{gather}\label{Eq:Dueneznotation}
\rho_{\pm}=1/2+i(t_0\pm \Delta/2),\qquad
\rho_0^\prime=1/2+X+i(t_0+Y),\\
\lambda\overset{\text{def.}}=\log(t_0/2\pi),
\end{gather}
so $t_0=(\gamma_++\gamma_-)/2$, $\beta_0^\prime=1/2+X$, $\gamma_0^\prime=t_0+Y$.
We have
\begin{multline}
\frac{\zeta^\prime}{\zeta}(\rho_0^\prime)=0=-\lambda/2+\frac{1}{\rho_0^\prime-\rho_+}+\frac{1}{\rho_0^\prime-\rho_-}\\
+\sum_{\rho\ne\rho_\pm}\left(\frac{1}{\rho_0^\prime-\rho}-\frac{1}{\rho}\right)
+\text{const.}+O(1/t_0).\label{Eq:zetapzeta}
\end{multline}
(In fact the imaginary part of the constant is $-\pi/4$, a fact we will use below.)\ \ 
We  rescale with 
\[
x=X\lambda,\qquad y=Y\lambda,\qquad\delta=\Delta\lambda/2\pi.
\]
The contribution in \eqref{Eq:zetapzeta} of the zeros $\rho_\pm$ to the real part is
\[
\frac{2 x \lambda \left(\pi ^2 \delta ^2+x^2+y^2\right)}{\left(\pi ^2 \delta ^2-2 \pi  \delta  y+x^2+y^2\right) \left(\pi ^2 \delta ^2+2 \pi 
   \delta  y+x^2+y^2\right)},
\]
 to the imaginary part is
\[
\frac{-2 y \lambda \left(-\pi ^2 \delta ^2+x^2+y^2\right)}{\left(\pi ^2 \delta ^2-2 \pi  \delta  y+x^2+y^2\right) \left(\pi ^2 \delta ^2+2
   \pi  \delta  y+x^2+y^2\right)},
\]
and each term in the sum over $\rho$ contributes
\[
\frac{x\lambda}{\left(y+(t_0-\gamma)\lambda\right)^2+x^2}+\frac{1/2}{1/4+\gamma^2}
\]
to the real part and
\[
\frac{\lambda ((\gamma -t_0) \lambda-y)}{\left(y+(t_0-\gamma)\lambda\right)^2+x^2}+\frac{\gamma}{1/4+\gamma^2}
\]
to the imaginary part.
We multiply \eqref{Eq:zetapzeta} by
\[
\left(\pi ^2 \delta ^2-2 \pi  \delta  y+x^2+y^2\right) \left(\pi ^2 \delta ^2+2
   \pi  \delta  y+x^2+y^2\right)/\lambda,
\]
and expand $x$ and $y$ as functions of $\delta$:
\begin{gather*}
x(\delta)=\frac{x^{\prime\prime}(0)}{2}\delta^2+O(\delta^4)\\
y(\delta)=\frac{y^{\prime\prime}(0)}{2}\delta^2+O(\delta^4).
\end{gather*}
The real parts of the contribution of the zeros $\rho_\pm$, as well as the terms which originated from the $-\lambda/2$ and from the constant, is now
\[
\left(\frac{\pi^4\log(\pi)}{2\lambda}-\frac{\pi^4}{2}+\pi^2x^{\prime\prime}(0)\right)\delta^4+O(\delta^6),
\]
while the remaining zeros contribute only $O(\delta^6)$ to (the right side of) \eqref{Eq:zetapzeta}.  From (the left side of) \eqref{Eq:zetapzeta} we deduce that every coefficient of $\delta$ must be $0$, in particular,
\begin{gather*}
\frac{\pi^4\log(\pi)}{2\lambda}-\frac{\pi^4}{2}+\pi^2x^{\prime\prime}(0)=0\qquad\text{or}\\
x(\delta)=\frac{\pi^2}{4}\left(1-\frac{\log(\pi)}{\lambda}\right)\delta^2+O(\delta^4),
\end{gather*}
where the coefficient of $\delta^2$ is $\sim\pi^2/4$ just as in \cite[(6.15)]{Duenez}.  

For the example of van de Lune \emph{et.\ al.}\ on page \pageref{vdL}, we compute
\[
x=2.371205\cdot 10^{-7},\quad \frac{\pi^2}{4}\left(1-\frac{\log(\pi)}{\lambda}\right)\delta^2=2.219909\cdot 10^{-7}.
\]

In contrast, the imaginary parts of the contribution of the zeros $\rho_\pm$ and the term which originated from the constant is
\[
\left(-\frac{\pi^5}{4\lambda}+\pi^2 y^{\prime\prime}(0)\right)\delta^4+O(\delta^6),
\]
while the remaining zeros contribute
\[
-\frac{\pi^4}{\lambda}\sum_{\rho\ne\rho_\pm}\frac{1}{t_0-\gamma}\,\,\delta^4+O(\delta^6).
\]
Again we determine
\begin{gather*}
\pi^2 y^{\prime\prime}(0)-\frac{\pi^5}{4\lambda}-\frac{\pi^4}{\lambda}\sum_{\rho\ne\rho_\pm}\frac{1}{t_0-\gamma}=0\qquad\text{or}\\
y(\delta)=\frac{\pi^2}{2\lambda}\left(\frac{\pi}{4}+\sum_{\rho\ne\rho_\pm}\frac{1}{t_0-\gamma}\right)\,\, \delta^2+O(\delta^4).
\end{gather*}
Here the $\delta^2$ coefficient, although not identically $0$, will only be significant when a failure of cancellation in the sum over $\rho\ne\rho_\pm$ is able to dominate the $\lambda^{-1}\sim\log(t_0)^{-1}$ decay.  For the example of van de Lune \emph{et.\ al.}\  we have
$y=7.217276\cdot 10^{-9}$.
\subsection*{Contributions of the terms}
We want to consider the contributions of the various terms in \eqref{Eq:newPXi} to the criteria \eqref{Eq:SLP} for a strong Lehmer pair.  
\subsubsection*{Central zero, imaginary part}
Consider first, in the sum over zeros $\rho^\prime$, the contribution of the unique zero $\rho_0^\prime$ which lies between $\rho_-$ and $\rho_+$.  
In the notation of \eqref{Eq:Dueneznotation}, we have
\begin{multline}\label{Eq:negligible}
\Delta^2\tim\left(\frac{1}{(\rho_--\rho_0^\prime)}+\frac{1}{(\rho_+-\rho_0^\prime)}\right)\cdot\frac{\pi}{4}=\\
\frac{-2\pi Y \left(1-4 \left(X^2+Y^2\right)/\Delta^2\right)}{1+8 \left((X^2-Y^2)/\Delta^2+16
   \left(X^2+Y^2\right)^2/\Delta^4\right)}
\end{multline}
is an odd function of $Y$.  With
\begin{equation}\label{Eq:bigY}
Y=\frac{y(\delta)}{\lambda}=\frac{\pi^2}{2\lambda^2}\left(\frac{\pi}{4}+\sum_{\rho\ne\rho_\pm}\frac{1}{t_0-\gamma}\right)\,\, \delta^2+O(\delta^4),
\end{equation}
we expect \eqref{Eq:negligible} to be negligible\footnote{and with a slight bias to be negative, since the $\pi/4$ term gives $Y$ a slight bias to be positive.}.  
%Figure \ref{F:7} below shows a histogram of the data, described in the next section.

\subsubsection*{Central zero, real part}
On the other hand,
with
\[
r=\frac{\beta_0^\prime-1/2}{\Delta}=\frac{x}{2\pi\delta}\sim\frac{\pi}{8}\delta+O(\delta^3)
\]
we have that
\begin{align}
-\Delta^2\tre\left(\frac{1}{(\rho_--\rho_0^\prime)^2}+\frac{1}{(\rho_+-\rho_0^\prime)^2}\right)=&\frac{8(1-4r^2)}{(1+4r^2)^2}\label{Eq:8}\\
=&8-\frac{3\pi^2}{2}\delta^2+O(\delta^4).\notag
\end{align}
Here we've made the simplifying approximation\footnote{It would be desirable to bound the error in making this approximation.} that $\gamma_0^\prime\approx t_0$, $Y\approx 0$.  This contribution is largest for $r=0$, and actually negative for $r>1/2$, see Figure \ref{F:3}.  

\subsubsection*{Stirling's formula contribution}  We must account for the contribution from both terms
$-P\Xi^\prime(\gamma_+)$ and $-P\Xi^\prime(\gamma_-)$.  With our usual notation,
\begin{equation}\label{Eq:Stirling}
\Delta^2\log(\gamma_0^\prime/2\pi)^2/2=2\pi^2\delta^2.
\end{equation}
Thus the terms \eqref{Eq:8} and \eqref{Eq:Stirling} together contribute
\begin{equation}\label{Eq:sum}
8+\frac{\pi^2}{2}\delta^2+O(\delta^4).
\end{equation}

\begin{figure}
\begin{center}
\includegraphics[scale=.75, viewport=0 0 400 220,clip]{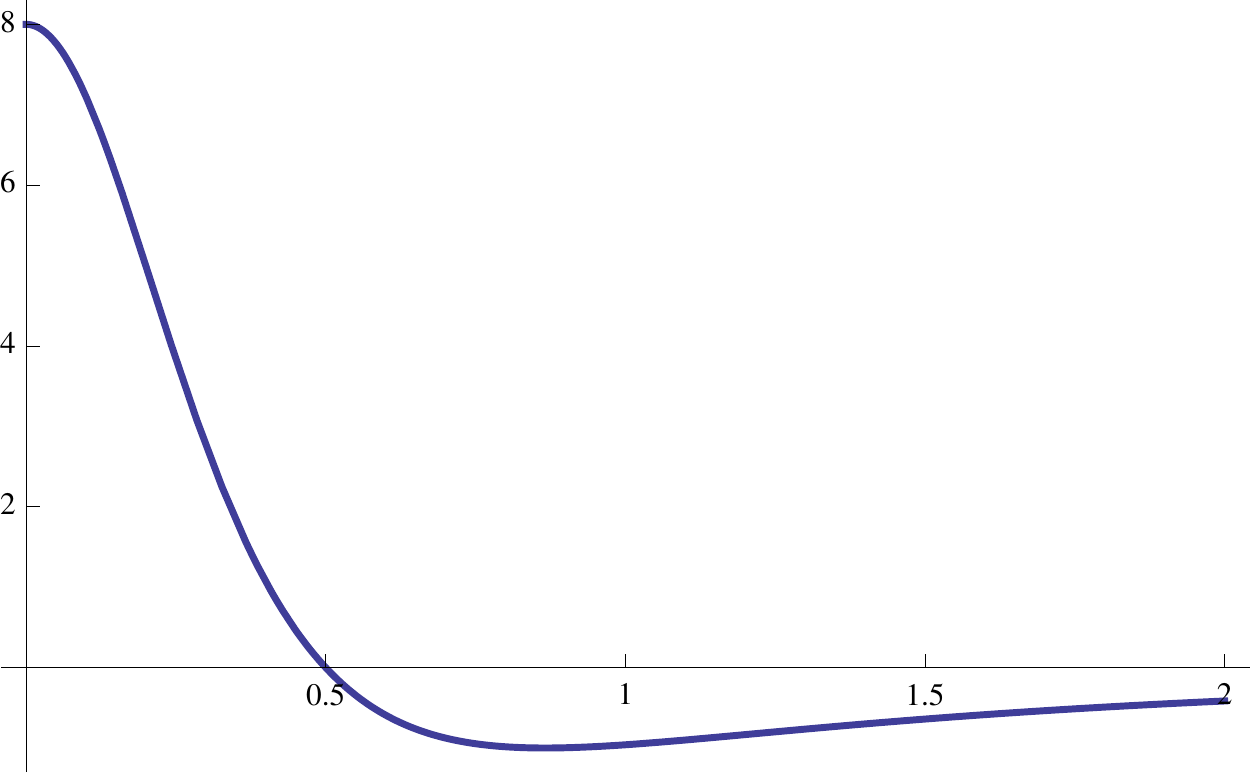}
\caption{Plot of $8(1-4r^2)/(1+4r^2)^2$.}\label{F:3}
\end{center}
\end{figure}

\subsubsection*{Remaining zeros, imaginary part}
Consider next the contribution of those zeros $\rho^\prime\ne\rho_0^\prime$.  In
\begin{equation}\label{Eq:odd}
\Delta^2 \tim\sum_{\substack{|\gamma^\prime-\gamma_0^\prime|\le 1/\log\log(\gamma_0^\prime) \\ \rho^\prime\ne\rho_0^\prime}}\left( \frac{1}{\rho_+-\rho^\prime}+\frac{1}{\rho_--\rho^\prime}\right)\cdot \frac{\pi}{4},
\end{equation}
each term is of the same form as the right side of \eqref{Eq:negligible}.  Although $Y$ is no longer small compared to $\Delta$, we expect cancellation in \eqref{Eq:odd} between those terms with $\gamma^\prime>\gamma_+$, (i.e.,\ $Y>0$) and those with $\gamma^\prime<\gamma_-$ (i.e.,\ $Y<0$).  
%Figure \ref{F:9} below shows a histogram of the data.

\begin{figure}
\begin{center}
\includegraphics[scale=1, viewport=0 0 400 270,clip]{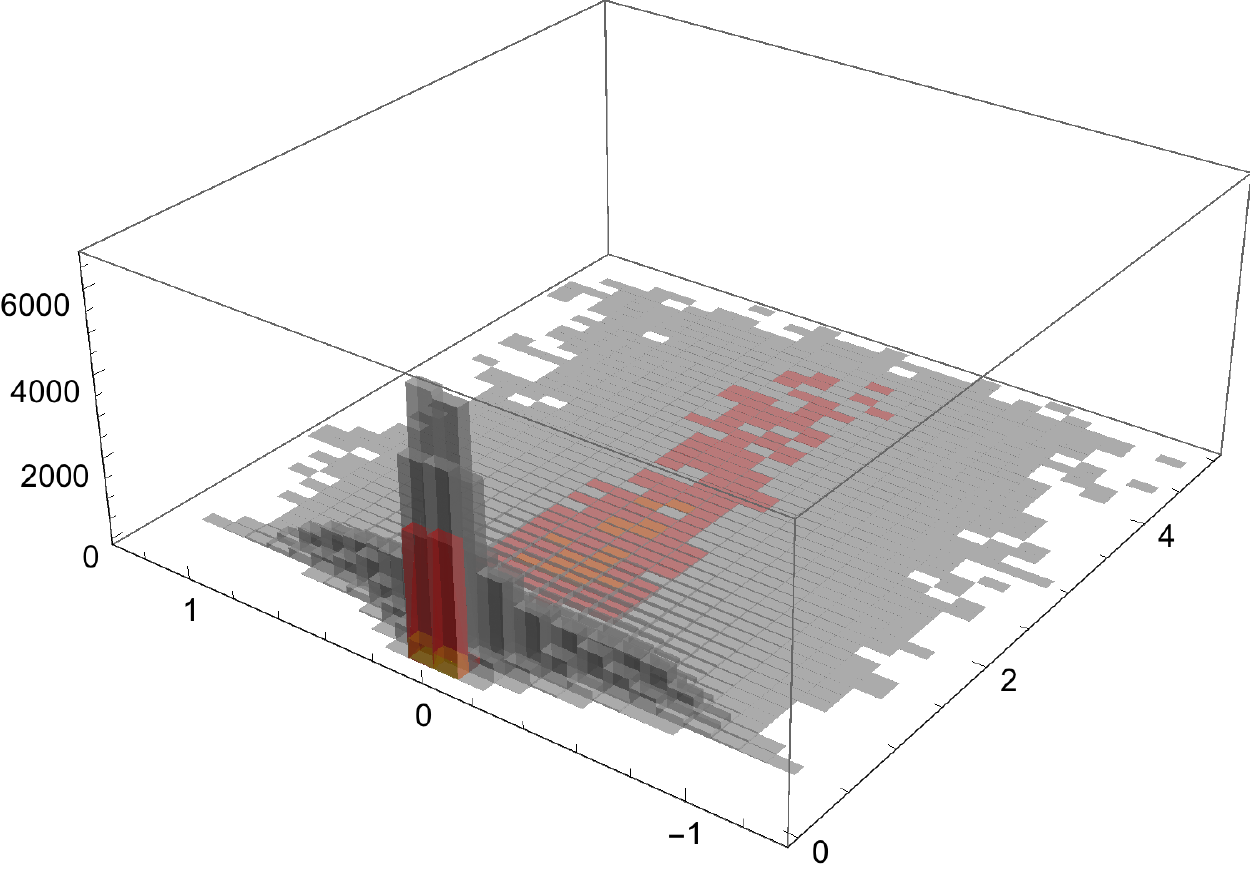}
\caption{Zeros of $\zeta^\prime(s)$, scaled by average gap between zeros of $\zeta(s)$.  Lehmer pairs in red, strong Lehmer pairs in yellow.}\label{F:4}
\end{center}
\end{figure}

We can offer a heuristic that \eqref{Eq:negligible} and \eqref{Eq:odd} are strongly correlated:  Suppose in \eqref{Eq:odd} there is an excess of zeros $\rho^\prime$ with $\gamma^\prime>\gamma^+$ compared to those with $\gamma^\prime<\gamma^-$, so that $\rho_+-\rho^\prime$ and $\rho_--\rho^\prime$ will more often have imaginary part less than zero, and their reciprocal will more often have positive imaginary part, i.e., \eqref{Eq:odd} will be positive.  In this scenario, we also expect a surplus of zeros $\rho$ of $\zeta(s)$, $\rho\ne\rho_\pm$ with $\gamma>t_0$ versus\ those with $\gamma<t_0$, so \eqref{Eq:bigY} will be negative and \eqref{Eq:negligible} will  also be positive.  (Analogously if there is an excess of zeros with $\gamma^\prime<\gamma^-$.)\ \   Figure \ref{F:12} below shows the data  comparing \eqref{Eq:negligible} with \eqref{Eq:odd}.

\begin{figure}
\begin{center}
\includegraphics[scale=1, viewport=0 0 400 270,clip]{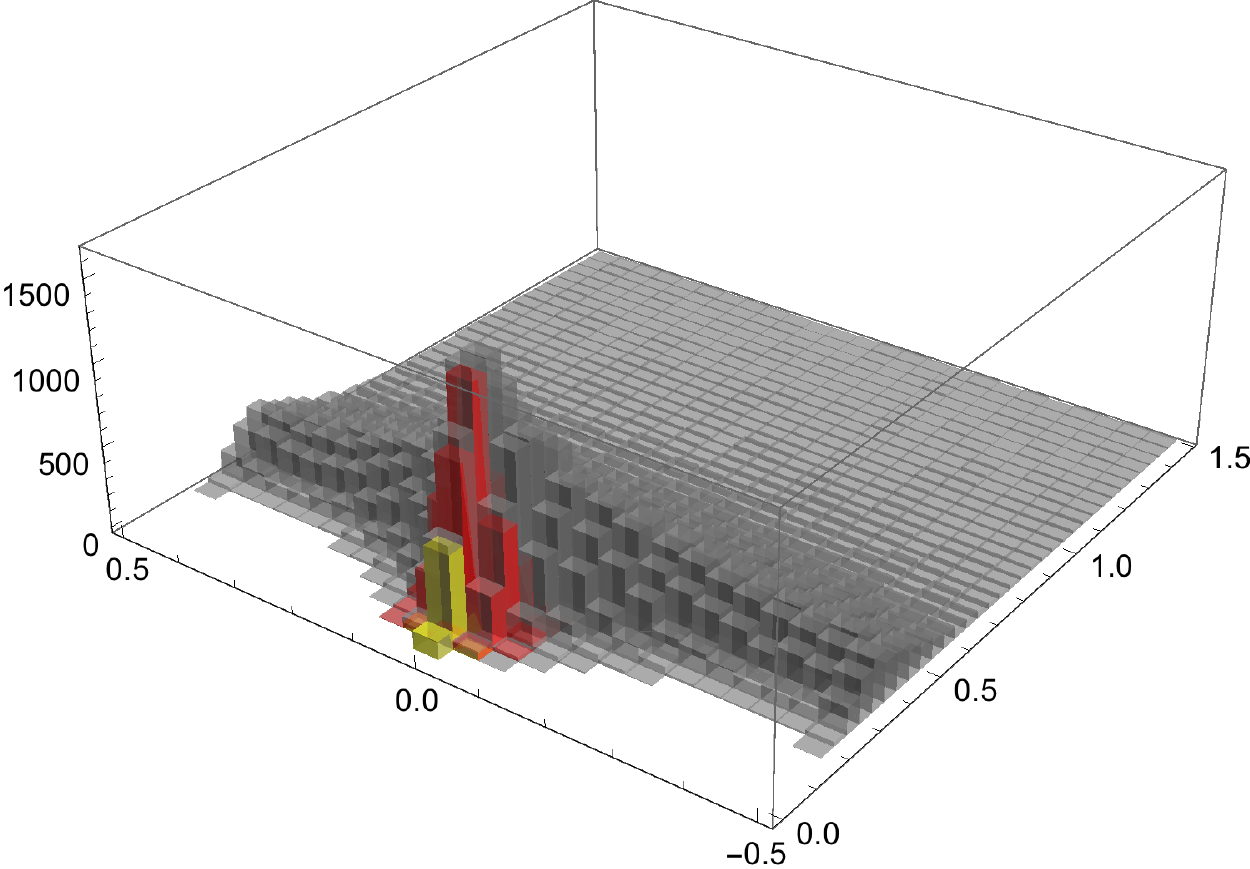}
\caption{Zeros of $\zeta^\prime(s)$, scaled by actual gap between neighboring zeros of $\zeta(s)$.  Lehmer pairs in red, strong Lehmer pairs in yellow.}\label{F:5}
\end{center}
\end{figure}

\subsubsection*{Remaining zeros, real part}  For
\begin{equation}\label{Eq:small}
-\Delta^2 \tre\sum_{\substack{|\gamma^\prime-\gamma_0^\prime|\le 1/\log\log(\gamma_0^\prime) \\ \rho^\prime\ne\rho_0^\prime}}\left( \frac{1}{(\rho_+-\rho^\prime)^2}+\frac{1}{(\rho_--\rho^\prime)^2}\right),
\end{equation}
write each term $\rho^\prime-\rho_+$ or  $\rho^\prime-\rho_-$ as $r^\prime\exp(i\theta^\prime)$, with $-\pi/2<\theta^\prime<\pi/2$.  Then
\[
-\Delta^2\tre\left(\frac{1}{(\rho^\prime-\rho_\pm)^2}\right)=-\left(\frac{\Delta}{r^\prime}\right)^2\cos(2\theta^\prime).
\]

We can model this by assuming that $\theta^\prime=\pm\pi/2$ (the worst case bound), and $r^\prime=2\pi j/\lambda$ for the $j$th term in the sum (i.e., assuming the zeros $\rho^\prime$ are perfectly regularly spaced.)\ \ In this scenario, the sum is $\delta^2\sum_j 1/j^2=O(\delta^2)$.  Although we expect this term to be small, it would be desirable to have a more rigorous analysis.  
%Zeros $\rho^\prime$ close enough to $\rho_+$ or $\rho_-$ to have $-\pi/4<\theta^\prime<\pi/4$ will contribute negatively to the sum, while those far enough away to have $|\theta^\prime|\gg \pi/4$ will have $r^\prime\gg\Delta$.  
%Figure \ref{F:8} below shows a histogram of the data.

\section{Data}\label{S:7}
\begin{figure}
\begin{center}
\includegraphics[scale=.8, viewport=0 0 400 270,clip]{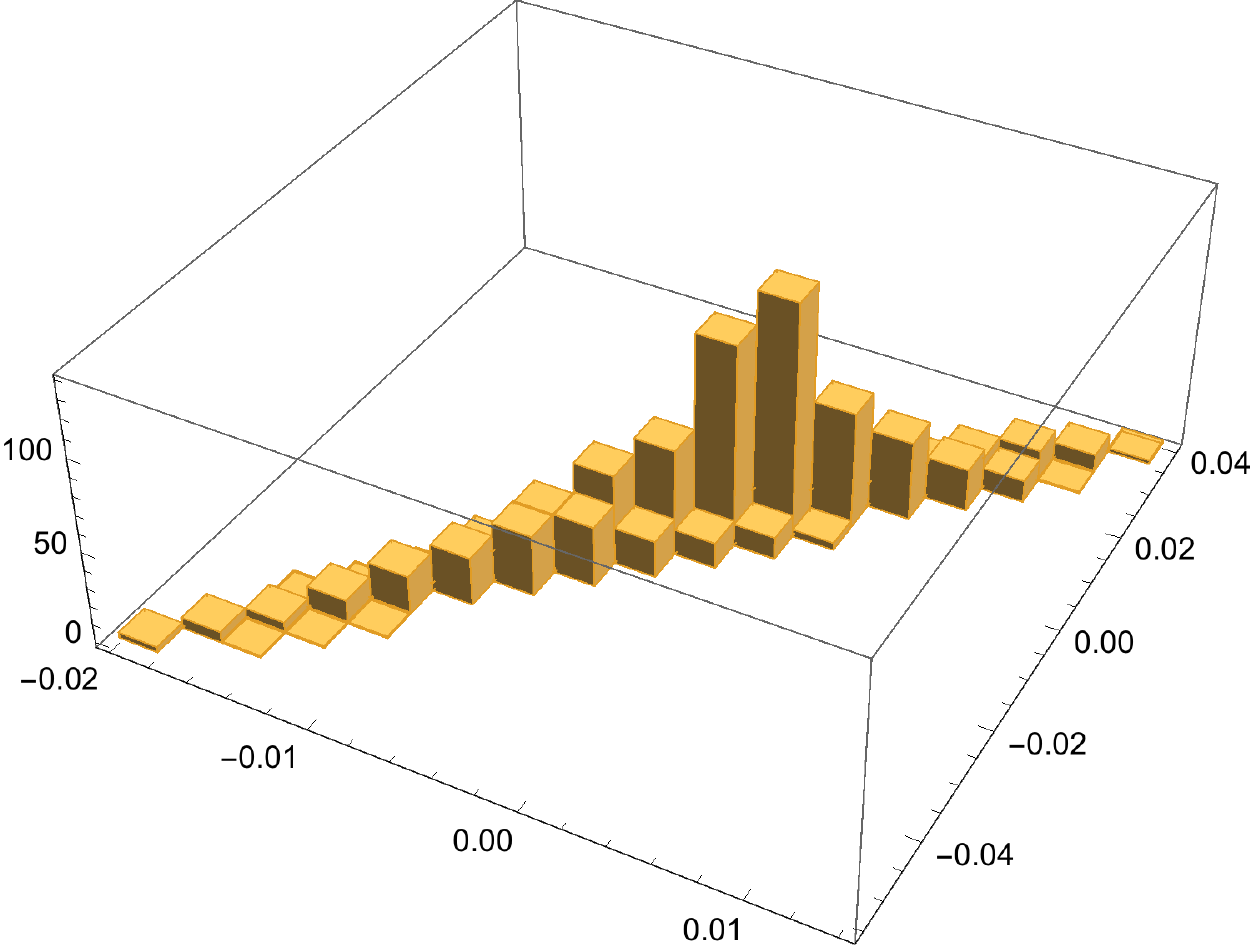}
\caption{Histogram of data for \eqref{Eq:negligible} v.\ \eqref{Eq:odd}.  \emph{Mathematica} computes the correlation as $0.99771$.}\label{F:12}
\end{center}
\end{figure}

We used \emph{Mathematica} to look for Lehmer pairs and strong Lehmer pairs in the range $10^6\le t\le 10^6+6\cdot 10^4$; i.e., $\rho_-=\rho_k$ and $\rho_+=\rho_{k+1}$ in the range $1\,747\,144\le k\le 1\,861\,805$, or $114\,661$ pairs.  Of these, 7398 were determined to be Lehmer pairs and of these, 855 were strong Lehmer pairs.  

These data were compared to the $108\,043$ zeros  of $\zeta^\prime(s)$ in the same range.  Figures \ref{F:4} and \ref{F:5} show histograms of the zeros $\rho^\prime$, shifted by $1/2+(\gamma_-+\gamma_+)/2\cdot i$.  Zeros which do not lie between a Lehmer pair are colored in gray; those which do are red or yellow according to whether the Lehmer pair is not, or is, a strong Lehmer pair.  In Figure \ref{F:4}, the zeros are scaled by $\log(\gamma^\prime)/2\pi$, the reciprocal of the \emph{average} gap between the zeros.  In Figure \ref{F:5}, the zeros are scaled by $1/(\gamma_+-\gamma_-)$, the reciprocal of the \emph{actual} gap between the zeros.  Note the different scales on the imaginary axis, which extends to the left in both figures.  

In Figure \ref{F:4} all of the data are shown.  In particular, there are infrequent zeros whose distance from the critical line (axis extends to the right) is as much as 5 times the average distance, and corresponding to both Lehmer pairs and strong Lehmer pairs.  In Figure \ref{F:5}, the data have been truncated on the real axis at those zeros which lie at $1.5$ times the actual distance, in order to highlight the behavior of those zeros closest to the critical line. (By definition the  data lie between $-0.5$ and $0.5$ on the imaginary axis in this figure.)

Figure \ref{F:12} shows, for the 855 strong Lehmer pairs,  a histogram of the contribution of \eqref{Eq:negligible} and \eqref{Eq:odd}, respectively,  to $-\Delta^2\left(P\Xi^\prime(\gamma_+)+P\Xi^\prime(\gamma_-)\right)$.   As expected these are both small and  positively correlated: \emph{Mathematica} computes the correlation as $0.99771$.

Figure \ref{F:6} shows the histogram of the terms \eqref{Eq:small} \emph{divided by} $\delta^2$.  This supports our heuristic that \eqref{Eq:small} is $O(\delta^2)$.

Figure \ref{F:8}  shows the combined contribution of the most significant terms,
\begin{gather*}
-\Delta^2\tre\left(\frac{1}{(\rho_--\rho_0^\prime)^2}+\frac{1}{(\rho_+-\rho_0^\prime)^2}\right)\sim 8-\frac{3\pi^2}{2}\delta^2,\quad\text{and}\\\
\Delta^2\log(\gamma_0^\prime/2\pi)^2/2=2\pi^2\delta^2,
\end{gather*}
respectively.  As expected these are strongly negatively correlated: \emph{Mathematica} computes the correlation as $-0.997486$.  Their combined contribution \eqref{Eq:sum} goes a long way towards explaining the location of the strong Lehmer pairs in the histogram Figure \ref{F:5}:   $8+\pi^2\delta^2/2<42/5$ when $\delta<0.285$.  In other words, strong Lehmer pairs tend to arise from a small gap between zeros of $\zeta(s)$, and from a zeros of $\zeta^\prime(s)$ very near the critical line.

We can reframe this in terms of Soundararajan's Conjecture B of \cite{Sound} which predicts:
\[
\text{(i) }\liminf \,\delta=0\quad\Leftrightarrow\quad \text{(ii) }\liminf \,x=0.
\]
\begin{conjecture}
Soundararajan's Conjecture B implies the existence of infinitely many strong Lehmer pairs, and thus, by Theorem \ref{Th:1} and the theorem of Csordas \emph{et.\ al.} in \cite{CSV}, that the de Bruijn-Newman constant $\Lambda$ is $0$.
\end{conjecture}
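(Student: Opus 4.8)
The content to be proved is really that Conjecture B yields infinitely many \emph{strong} Lehmer pairs; the conclusion $\Lambda=0$ then follows at once from Theorem \ref{Th:1} and the theorem of Csordas \emph{et al.}\ in \cite{CSV}. So the plan is to exhibit an infinite family of pairs of consecutive zeros $\{\gamma_-,\gamma_+\}$ of $\zeta(s)$ satisfying \eqref{Eq:SLP}. By Theorem \ref{Th:3} and the bookkeeping of \S\ref{S:6}, the quantity $-\Delta^2\bigl(P\Xi^\prime(\gamma_+)+P\Xi^\prime(\gamma_-)\bigr)$ splits into a main part $8+\tfrac{\pi^2}{2}\delta^2+O(\delta^4)$, namely the combined central-real-part and Stirling contribution \eqref{Eq:sum} --- which is valid only when the central zero $\rho_0^\prime$ of $\zeta^\prime(s)$ is ``well centered'' between $\rho_-$ and $\rho_+$ --- plus the three small terms \eqref{Eq:negligible}, \eqref{Eq:odd}, \eqref{Eq:small}. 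It therefore suffices to produce a sequence of pairs along which $\delta\to 0$, along which $\rho_0^\prime$ stays well centered (so the main part tends to $8$ rather than blowing up as $\rho_0^\prime$ approaches $\rho_+$ or $\rho_-$), and along which \eqref{Eq:negligible}, \eqref{Eq:odd}, \eqref{Eq:small} all tend to $0$: then the quantity tends to $8<42/5$.

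The input that $\delta\to 0$ along \emph{some} sequence is part (i) of Conjecture B --- or, as matters stand, a consequence of Montgomery's pair correlation conjecture --- and it is the minimal hypothesis under which the statement can have content; I would first fix such a sequence of pairs. The forward implication (i)$\Rightarrow$(ii) of Conjecture B should then control the real part of the associated central zeros, giving $x=X\lambda\to 0$, hence $r=X/\Delta=x/(2\pi\delta)$ bounded (heuristically $r\sim\tfrac{\pi}{8}\delta$, as in the power series expansion of \S\ref{S:6}), so that the central real-part term \eqref{Eq:8} stays at its maximal value $8$. For the imaginary offset I would push that same expansion to prove rigorously that $Y=\gamma_0^\prime-t_0$ is $O(\delta^2/\lambda)$, cf.\ \eqref{Eq:bigY}; with $X,Y=o(\Delta)$ one has $\rho_0^\prime\approx 1/2+i(\gamma_-+\gamma_+)/2$, which makes \eqref{Eq:8} tend to $8$ and \eqref{Eq:negligible} tend to $0$.

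It then remains to bound the contributions \eqref{Eq:odd} and \eqref{Eq:small} of the non-central zeros $\rho^\prime\ne\rho_0^\prime$ of $\zeta^\prime(s)$, which \S\ref{S:6} flags as only heuristic. For \eqref{Eq:small} the plan is to establish an unconditional bound $O(\delta^2)$, by estimating the number of zeros of $\zeta^\prime(s)$ in intervals of length $\asymp 1/\lambda$ around height $t_0$ --- a density bound of Levinson--Montgomery/Berndt type --- together with their distance from the critical line, and then summing $(\Delta/|\rho^\prime-\rho_\pm|)^2\cos(2\theta^\prime)$ against that spacing, as in the regularly spaced model of \S\ref{S:6}. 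For \eqref{Eq:odd}, whose factor $\tim(h^\prime/h)$ is $\sim\pi/4$, the plan is to use the near-symmetry of the zero counting function about $t_0$ to pair the zeros $\rho^\prime$ with $\gamma^\prime>\gamma_+$ against those with $\gamma^\prime<\gamma_-$, reducing the sum to a difference controlled by the same density estimates; the data in Figure \ref{F:12} (correlation $0.99771$ with \eqref{Eq:negligible}) support that this term is both small and tracks \eqref{Eq:negligible}.

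The hard part will be twofold. First, Conjecture B as literally stated equates two \emph{global} liminfs, whereas the argument needs $\delta$ and $x$ to be simultaneously small \emph{for the same pairs} $\{\gamma_-,\gamma_+\}$ --- a local, quantitative strengthening that one must either read off from a proof of Conjecture B or impose as an extra hypothesis. Second, and more seriously, the bounds on \eqref{Eq:odd} and \eqref{Eq:small}, and the control of the imaginary offset $Y$ of the central zero, rest at present only on heuristics; making them unconditional, so that after multiplication by $\Delta^2$ all of \eqref{Eq:negligible}, \eqref{Eq:odd}, \eqref{Eq:small} are genuinely $o(1)$, appears to require new information about the \emph{vertical} distribution of the zeros of $\zeta^\prime(s)$, about which, as \S\ref{S:6} remarks, comparatively little is known.
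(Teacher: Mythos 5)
The statement you are asked to prove is labeled a \emph{Conjecture} in the paper, and the paper offers no proof of it: what it provides is the heuristic decomposition of \S\ref{S:6} and the numerical evidence of \S\ref{S:7}. Your outline follows essentially that same route --- Theorem \ref{Th:3} splits $-\Delta^2\left(P\Xi^\prime(\gamma_+)+P\Xi^\prime(\gamma_-)\right)$ into the dominant contribution \eqref{Eq:sum}, namely $8+\tfrac{\pi^2}{2}\delta^2+O(\delta^4)$, which falls below $42/5$ once $\delta<0.285$, plus the terms \eqref{Eq:negligible}, \eqref{Eq:odd}, \eqref{Eq:small} that must be shown to be $o(1)$ --- and you have correctly located exactly the points the paper itself leaves open: the passage from the global liminf equivalence of Conjecture B to simultaneous smallness of $\delta$ and $x$ for the same pair, the control of the imaginary offset $Y$ of the central zero, and the unconditional bounding of the non-central sums \eqref{Eq:odd} and \eqref{Eq:small}, which in the paper rest only on a regular-spacing model and the empirical correlation in Figure \ref{F:12}. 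So your proposal is a faithful reconstruction of the paper's reasoning rather than a proof; no complete proof exists to compare it against, and the obstacles you name in your final paragraph are precisely the open problems.
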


%\begin{figure}
%\begin{center}
%\includegraphics[scale=.8, viewport=0 0 400 270,clip]{term2hist}
%\caption{Contribution of the terms \eqref{Eq:negligible}}\label{F:7}
%\end{center}
%\end{figure}

%\begin{figure}
%\begin{center}
%\includegraphics[scale=.8, viewport=0 0 400 270,clip]{term4hist}
%\caption{Contribution of the terms \eqref{Eq:odd}}\label{F:9}
%\end{center}
%\end{figure}

\begin{figure}
\begin{center}
\includegraphics[scale=.8, viewport=0 0 400 270,clip]{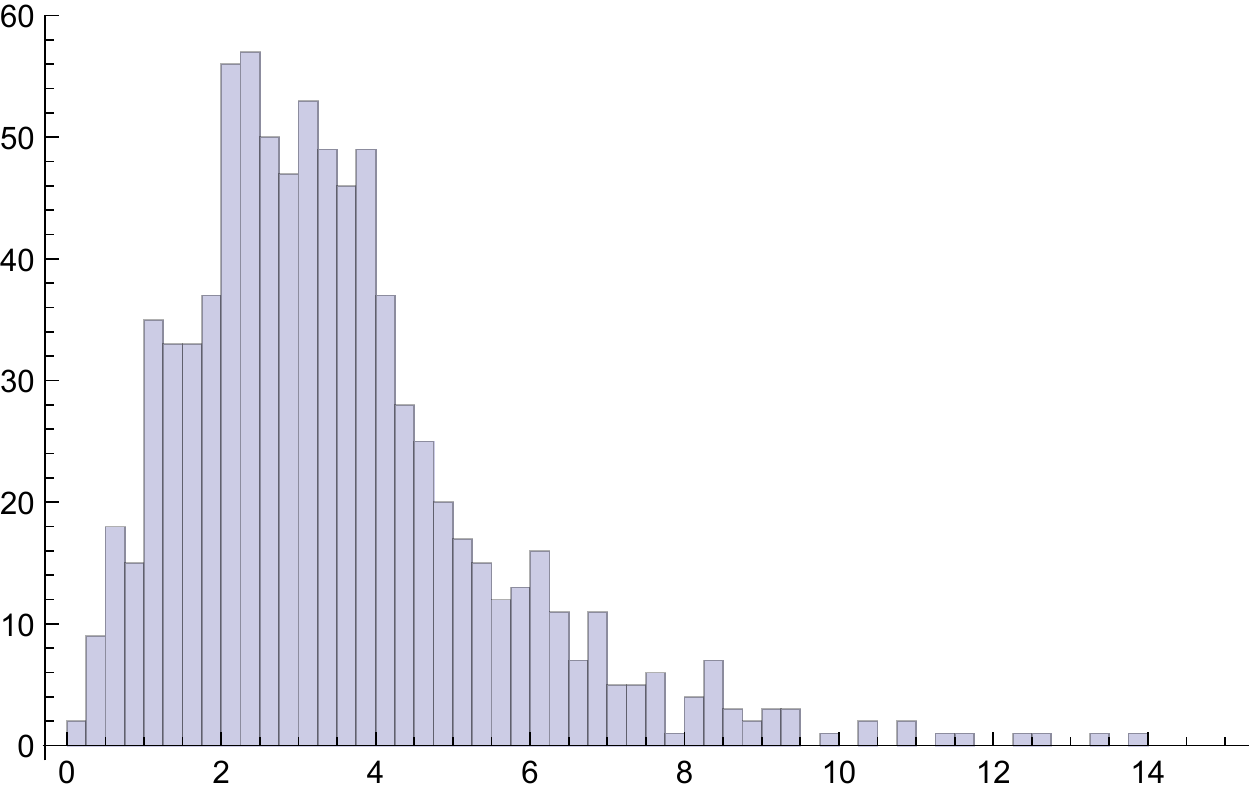}
\caption{Contribution of the terms \eqref{Eq:small}, divided by $\delta^2$}\label{F:6}
\end{center}
\end{figure}

\begin{figure}
\begin{center}
\includegraphics[scale=.8, viewport=0 0 400 270,clip]{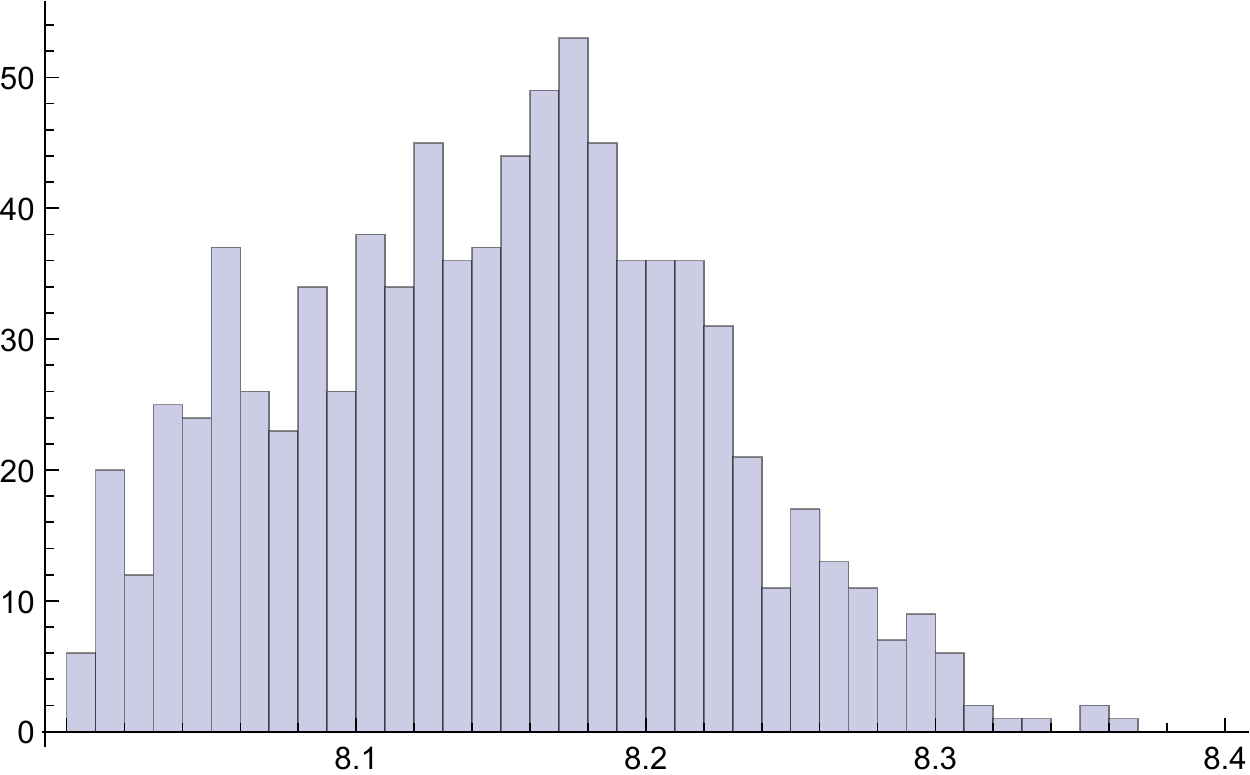}
\caption{Contribution of the combined terms \eqref{Eq:8} and \eqref{Eq:Stirling}}\label{F:8}
\end{center}
\end{figure}

\subsubsection*{Acknowledgements}  We would like to thank David Farmer for sharing his computations of zeros of $\zeta^\prime(s)$ in the range $10^6\le t\le 10^6+6\cdot 10^4$, and the anonymous referee for a careful reading of the manuscript and helpful suggestions.

\end{document}